\newtheorem{theorem}{Theorem}
\newtheorem{lemma}[theorem]{Lemma}
\newtheorem{corollary}[theorem]{Corollary}
\theoremstyle{definition}
\newtheorem{remark}[theorem]{Remark}
\numberwithin{equation}{section}
\numberwithin{figure}{section}
\numberwithin{theorem}{section}
\newcommand{\R}{\mathbb{R}}
\newcommand{\E}{\mathbb{E}}
\newcommand{\cL}{\mathcal{L}}
\newcommand{\cG}{\mathcal{G}}
\newcommand{\eps}{\varepsilon}
\newcommand{\om}{\omega}
\newcommand{\Om}{\Omega}
\newcommand{\cF}{\mathcal{F}}
\newcommand{\bP}{\mathbb{P}}
\newcommand{\gam}{\gamma}
\newcommand{\ol}{\overline}
\newcommand{\frakg}{\mathfrak{g}}
\begin{document}

\title{Fluctuations in the Homogenization of Semilinear Equations with Random Potentials}




\author[G. Bal]{Guillaume Bal}
\address{Department of Applied Physics and Applied Mathematics\\
Columbia University\\ 500 W. 120th Street, NY 10025, USA}
\email{gb2030@columbia.edu}

\author[W. Jing]{Wenjia Jing}
\address{Department of Mathematics\\
The University of Chicago\\ 5734 S. University Avenue Chicago, IL 60637, USA}
\email{wjing@math.uchicago.edu}


\maketitle

\begin{abstract}

We study the stochastic homogenization and obtain a random fluctuation theory for semilinear elliptic equations with a rapidly varying random potential. To first order, the effective potential is the average potential and the nonlinearity is not affected by the randomness. We then study the limiting distribution of the properly scaled homogenization error (random fluctuations) in the space of square integrable functions, and prove that the limit is a Gaussian distribution characterized by the homogenized solution, the Green's function of the linearized equation around the homogenized solution, and by the integral of the correlation function of the random potential. These results enlarge the scope of the framework that we have developed for linear equations to the class of  semilinear equations.

\medskip

\noindent{\bf Keywords:} 
  stochastic homogenization,
  semilinear elliptic equation,
  random fields,
  probability measure in Hilbert spaces,
  variational problem.
\medskip

\noindent{\bf AMS Classification:}
  35B27 
  35J61 
  60F05 
\smallskip

\end{abstract}



\section{Introduction}

\subsection{Motivation and overview}
We study the asymptotic stochasticity of solutions to the semilinear elliptic equation with random potential
\begin{equation} \label{e.rpde}
\left\{
\begin{aligned}
&-\Delta u^\eps + q_\eps(x,\omega) u^\eps + f(u^\eps) = g(x), \quad&\quad& x \in D,\\
&u^\eps = 0, \quad&\quad& x \in \partial D,
\end{aligned}
\right.
\end{equation}
and characterize the limiting distribution of the random fluctuations (correction to homogenization). Here, $D$ is an open bounded domain in $\R^d$, $d = 2,3$, with smooth (say $C^2$) boundary $\partial D$. The (linear) potential $q_\eps(x,\omega)$ is highly oscillatory and is of the form $q(\frac{x}{\eps},\omega)$, where $0 < \eps \ll 1$ denotes the scale of oscillations. We assume that $q(x,\omega)$, the potential function before scaling, is a stationary ergodic random field on a probability space $(\Om,\cF,\bP)$. The nonlinearity $f$ is assumed to satisfy conditions that, essentially, guarantee that for each $\om \in \Om$ and $g \in H^{-1}(D)$, the above equation admits a unique weak solution.

Under mild conditions, such as, e.g., stationarity and ergodicity, on the random potential $q(x,\omega)$ and under natural structural conditions on the nonlinearity $f(u)$, we prove in Theorem \ref{t.homog} below that the above semilinear problem homogenizes as $\eps \to 0$ and that the effective potential is in fact given by averaging and the nonlinearity term is not changed. In other words, $u^\eps$ converges strongly in $L^2(D)$ and weakly in $H^1(D)$, for a.e. $\om \in \Omega$, to the solution $u$ of the effective equation
\begin{equation} \label{e.hpde}
\left\{
\begin{aligned}
&-\Delta u + \ol{q} u + f(u) = g(x), \quad&\quad& x \in D,\\
&u = 0, \quad&\quad& x \in \partial D,
\end{aligned}
\right.
\end{equation}
where $\ol{q} = \E q$ is the average of $q(x,\om)$. The homogenized equation hence is deterministic and has smooth (non-oscillatory) coefficients.

The main contributions of this paper is an estimate of the size of the homogenization error $u^\eps - u$, say in the $L^2(\Om,L^2(D))$ norm, and more importantly, a study of the law (probability distribution) of the random fluctuations $u^\eps - u$, viewing them as a random element in the Hilbert space $L^2(D)$. Such asymptotic estimates of stochasticity beyond the homogenization limit find applications in uncertainty quantification and Bayesian formulation of PDE-based inverse problems; see e.g. \cite{BR08,NP09}.

The study of the limiting distribution of the homogenization error goes back to \cite{FOP-82}, where the Laplace operator with a random potential formed by Poisson bumps was considered. General random potential with short range correlations was considered recently in \cite{B-CLH-08}, and in \cite{BJ-DCDS-10, BJ-CMS-11} for other non-oscillatory differential operators with random potential. Long range correlated random potential was considered in \cite{BGGJ12_AA}. When random elliptic differential operators are considered, the limiting distribution of the homogenization error was obtained in \cite{BP-99} for short-range correlated elliptic coefficients, and in \cite{BGMP-AA-08} in the long-range correlated case; all in the one dimensional setting; we refer to \cite{GNO15,GO14,MO14,GM15,MN15} for important recent advances in this direction in higher spatial dimensions under strong structural assumptions on the probability distribution of the coefficients. 

The main results of this paper show that the general framework developed by the authors in \cite{B-CLH-08,BJ-CMS-11,BGGJ12_AA,Jing15}, for linear equations with random potential, essentially applies to the semilinear equation \eqref{e.rpde}. Let us briefly explain the main ideas. In the linear framework, e.g., for \eqref{e.rpde} with $f = 0$, let us denote the fundamental solution operator (Green's operator) of the Dirichlet problem for $-\Delta + \ol{q}$ by $\cG$. Then the homogenization error $u^\eps - u$ admits the following expansion formula:
\begin{equation}
u^\eps - u = -\cG \nu_\eps u + \cG \nu_\eps \cG \nu_\eps u + \cG \nu_\eps \cG \nu_\eps (u^\eps - u).
\label{e.lexpan}
\end{equation}   
Here, $\nu_\eps = q_\eps - \ol{q}$ denotes the random fluctuation of the potential. We also have $u^\eps - u = -\cG^\eps \nu_\eps u$, where $\cG^\eps$ is the solution operator associated to $-\Delta + q_\eps$. As long as $q_\eps > -\lambda_1$, the first eigenvalue of the Laplace operator, we obtain  a bound on $u^\eps - u$ in $L^2(\Om,L^2(D))$ provided that we can estimate the correlation function of $\nu_\eps$. The leading term in the right-hand side of \eqref{e.lexpan} is the first one. The last term is smaller because both $u^\eps - u$ and the operator $\cG \nu_\eps \cG$ are small. The middle term is also of smaller order but for a different reason: one can verify that $\cG \nu_\eps \cG \nu_\eps u - \E(\cG \nu_\eps \cG \nu_\eps u)$ is small, provided we can estimate fourth order moments of $\nu_\eps$, and $\E (\cG \nu_\eps \cG \nu_\eps u)$ is also small in dimensions $d = 2,3$.

For the nonlinear equation, \eqref{e.lexpan} has to be modified. In fact, as was already pointed out in \cite{FOP-82} without technical details, the leading term of $u^\eps - u$ in distribution should be given by $-\cG_u \nu_\eps u$, where $\cG_u$ is the solution operator associated to the linearization of the homogenized equation around the homogenized solution $u$. In this paper, we derive an expansion formula for $u^\eps - u$ (see \eqref{e.expan} below), which plays the role of \eqref{e.lexpan}. In fact, $\cG$ above is replaced by $\cG_u$ and there are two more terms involving the nonlinearity $f$. To overcome the difficulty caused by the nonlinearity, we first establish uniform $H^1$ and $L^\infty$ estimates on solutions to \eqref{e.rpde} that are independent of $\eps$ and $\om$. These uniform bounds, together with regularity assumptions on $f$, allow us to estimate $f(u^\eps) - f(u)$ as $u^\eps - u$ and estimate $f(u^\eps) - f(u) - f'(u)(u^\eps - u)$ as $(u^\eps - u)^2$. As a result, we show that the nonlinear terms in the expansion of $u^\eps - u$ do not contribute to the limiting distribution.

Finally, the picture of the size and the limiting distribution of the homogenization error $u^\eps - u$ is as follows; see Theorem \ref{t.sl.23} and Theorem \ref{thm:main3} below. In dimension $d = 2,3$, for short range correlated random potential, $u^\eps - u$ is of order $\eps^{\frac d 2}$ in the $L^2(\Om,L^2(D))$ norm, and the distribution in the space $L^2(D)$ of the normalized error $\eps^{-\frac d 2} (u^\eps - u)$ converges weakly to a Gaussian distribution on $L^2(D)$, with correlation kernel determined by the homogenized solution $u$, the Green's function associated to $\cG_u$, and the integral of the correlation function of $\nu(x,\om)$. In the long range case, for potentials that are functions of long range correlated Gaussian random fields, the size of $u^\eps - u$ is of order $\eps^{\frac \alpha 2}$, where $\alpha \in (0,d)$ is the decaying rate of the correlation of the underlying Gaussian field. The limiting distribution of $\eps^{-\frac \alpha 2}(u^\eps - u)$ is a long range correlated Gaussian, with correlation kernel depending also on $\alpha$ and certain parameters in the model of the random potential. 

\medskip

The rest of this paper is organized as follows: in section \ref{s.prelim} we make precise the main assumptions on the random potential and the nonlinearity in \eqref{e.rpde}, and state the main theorems. In section \ref{s.homog}, we establish well-posedness of \eqref{e.rpde} and more importantly, uniform $H^1$ and $L^\infty$ bounds that are independent of $\eps$ and $\om$. These results facilitate the homogenization proof. Section \ref{s.qerror} and \ref{s.distr} are devoted to the proof of the main results, where we characterize how the homogenization error scales in energy norm, and determine the limiting distribution of the scaled error. We recall the controls on the terms in the expansion formula that are the same as in the linear setting, without detailing the proof, and concentrate on how to apply the uniform $L^\infty$ bound to deal with the nonlinear terms, which are new. Finally, we make some comments in section \ref{s.discussion} and discuss some directions of further studies.


\section{Assumptions and Main Results}
\label{s.prelim}

\subsection{Assumptions}

We first state the main assumptions on the nonlinearity $f$ and the random potential $q(x,\om)$. Let $\lambda_1(-\Delta;D)$, which is henceforth simplified to $\lambda_1(-\Delta)$ or even $\lambda_1$, be the first eigenvalue of the Dirichlet Laplacian operator on $D$, that is,
\begin{equation}
\lambda_1(-\Delta) := \inf \left\{ \int_D \left| Dw\right|^2 dx \,: \, w \in H^1_0(D), \, \int_D w^2 dx = 1\right\}.
\end{equation}
Since $D$ is bounded, we note that $\lambda_1(-\Delta) > 0$. The first set of main assumptions on $q$ and $f$ are:

\begin{itemize}
\item[{[A1]}] $q(x,\omega)$, $x \in \R^d$, is a stationary ergodic random field on $(\Om,\cF,\bP)$. This means, there exist a family of $\bP$-preserving ergodic group of transformations $\{\tau_x : \Om \to \Om \}_{x \in \R^d}$, i.e.
\begin{equation*}
\begin{aligned}
&\bP(\tau_x A) = \bP(A) \text{ for all } x \in \R^d \text{ and } A \in \cF.\\
&\tau_y A = A \text{ for all } y \in \R^d \text{ implies that } \bP(A) \in \{0,1\}, 
\end{aligned}
\end{equation*}
and a random variable $\widetilde q$ such that $q(x,\om) = \widetilde{q}(\tau_x \om)$.

\item[{[A2]}] There exist $M \ge 1$ such that
\begin{equation}
\label{e.qbdd}
\left| \widetilde{q}(\om) \right| \le M, \quad \text{ for all } \om \in \Om.
\end{equation}
Moreover, there exists $\gam > 1$ such that $|f(s)| \le C(1+|s|^\gam)$ for all $s \in \R$, and if $d=3$, we further assume that $\gam < 2d/(d-2) -1 =  5$.

\item[{[A3]}] $f: \R \to \R$ is continuously twice differentiable, and for some $c > 0$,
\begin{equation}
\lambda_1 + \widetilde{q}(\om) + f'(s) \ge c, \quad \text{for all } \om \in \Om \text{ and } s \in \R.
\label{e.fprime}
\end{equation}
\end{itemize}

The assumptions [A2] and [A3] guarantee that for each $\om \in \Om$, the heterogeneous semilinear equation \eqref{e.rpde} admits a unique weak solution. The stationarity and ergodicity of $q(x,\om)$ further makes sure that this problem homogenizes, in the limit $\eps \to 0$, to the effective equation \eqref{e.hpde}. Notice that [A2][A3] still hold if we replace $\widetilde{q}$ by $\ol{q}$; hence, the homogenized problem is also uniquely solvable. 

To get quantitative estimates on the homogenization error $u^\eps - u$, and to find the limiting distribution of this error after proper scaling, we need more information on the random potential $q(x,\om)$, or equivalently the fluctuation
\begin{equation}
\nu(x,\om) := q(x,\om) - \E q = q(x,\om) - \ol{q}.
\label{e.nudef}
\end{equation}
In most of the paper, we assume that $q(x,\om)$ is a short-range correlated random field satisfying certain fourth-order moment estimates. Let $R(x) = \E (\nu(x,\om) \nu(0,\om)) = \E (\nu(x+y,\om) \nu(y,\om))$ be the auto-correlation function of the stationary field $\nu$. The term ``short range correlation'' refers to the condition that the correlation function $R(x)$ is integrable, i.e.
\begin{equation}
\sigma^2 : = \int_{\R^d} R(x) dx < \infty.
\label{e.sigdef}
\end{equation}
Note that by definition, $R(x)$ is a nonnegative definite function in the sense that for any positive integer $n$, for any $n$-tuples $(x_1,\cdots,x_n)$, the matrix formed by $(R(x_i - x_j))_{i,j = 1,\cdots,n}$, is nonnegative definite. By Bochner's theorem, the integral of $R$, i.e. $\sigma^2$, is nonnegative. Throughout the paper, we assume also that $\sigma^2 > 0$. 

We will need an estimate for (mixed) fourth order moments of $\nu$ later. To simplify the presentation, we impose a stronger condition using the notion of ``maximal correlation coefficient", which quantifies how fast the correlation of $\nu$ decays. Let $\mathcal{C}$ the set of compact sets in $\R^d$, and for two sets $K_1, K_2$ in $\mathcal{C}$, the distance $d(K_1,K_2)$ is defined to be
$$
d(K_1,K_2) = \min_{x \in K_1, y \in K_2} \, |x - y|.
$$
Given any compact set $K \subset \mathcal{C}$, we denote by $\cF_K$ the $\sigma$-algebra generated by the random variables $\{\nu(x) \,:\, x \in K\}$. The maximal correlation coefficient $\varrho$ of $\nu$ is defined as follows: for each $r > 0$, $\varrho(r)$ is the smallest value such that the bound
\begin{equation}\label{e.varrho}
\E \left( \varphi_1(\nu) \varphi_2(\nu)\right) \le \varrho(r) \sqrt{\E \left(\varphi^2_1(\nu)\right) \, \E \left(\varphi^2_2(\nu) \right)}
\end{equation}
holds for any two compact sets $K_1, K_2 \in \mathcal{C}$ such that $d(K_1,K_2) \ge r$ and for any two random variables of the form $\varphi_i(\nu)$, $i=1,2$, such that $\varphi_i(\nu)$ is $\cF_{K_i}$-measurable and $\E \varphi_i(\nu) = 0$. 

\medskip

The further assumption we have on $q(x,\om)$ is:

\begin{enumerate}
\item[{[S]}] The maximal correlation function satisfies $\varrho^{\frac 1 2} \in L^1(\R_+,r^{d-1} dr)$, that is
$$
\int_0^\infty \varrho^{\frac 1 2}(r) r^{d-1} dr < \infty.
$$
\end{enumerate}

Assumptions on the mixing coefficient $\varrho$ of random media have been used in \cite{B-CLH-08, BJ-CMS-11,HPP13}; we refer to these papers for explicit examples of random fields satisfying the assumptions. Note that the correlation function $R(x)$ of $\nu$ can be bounded by $\varrho$. For any $x \in \R^d$,
\begin{equation*}
|R(x)| = |\E \nu(x,\om) \nu (0,\om)| \le \varrho(|x|) \mathrm{Var} (q).
\end{equation*}
By [A2][A3], $q$, and hence its variance are bounded. Since we may assume $\varrho \in [0,1]$ (hence $\varrho \le \sqrt{\varrho}$), [S] implies $R \in L^1(\R^d)$ and $q(x,\omega)$ has short range correlations. In fact, $|R|^{\frac 1 2} \in L^1(\R^d)$, so roughly speaking, $|R(x)| \lesssim 1/|x|^\alpha$ with $\alpha > 2d$. Therefore, [S] is a stronger condition than $q$ being short range correlated, which would just require the decay rate $\alpha > d$. This faster decay of correlation is not necessary for the main results of this paper to hold (see Remark \ref{r.hypo} below). But using the assumption [S], we can simplify significantly the following fourth order moment estimates of the random potential $\nu(x,\omega)$.

For any four points $x,y,t$ and $s$ in $\R^d$, define
\begin{equation}\label{e.Psi}
\Psi_\nu(x,y,t,s) := \E\,\nu(x)\nu(y)\nu(t)\nu(s) - (\E\,\nu(x)\nu(y)) (\E\,\nu(t)\nu(s)).
\end{equation}
Were $\nu$ a Gaussian random field, its fourth order moments would decompose as a sum of  products of pairs of $R$ and the above quantity simplifies to a sum of two products of correlation functions. This property does not hold for general random fields. However, we have the similar estimate:
\begin{equation}
\label{e.cumu}
|\Psi_\nu(x,t,y,s)| \le \vartheta(|x-t|)\vartheta(|y-s|) + \vartheta(|x-s|)\vartheta(|y-t|)
\end{equation}
where $\vartheta(r) = (K\varrho(r/3))^{\frac 1 2}$, $K = 4\|\nu\|_{L^\infty(\Omega\times D)}$. We refer to \cite{HPP13} for the proof of this lemma. Estimates of this type based on mixing property already appeared in \cite{B-CLH-08}. 

\bigskip

\noindent{\bf Notations}. 
We simplify the notation of Lebesgue spaces $L^p(D)$ on $D$ to $L^p$ when this is not confusing; in particular, if Lebesgue space on the probability space $\Om$ is concerned, we make the dependence explicit. We use $H^1$ for the Sobolev space $W^{1,2}$ and we use $H^s$, $s \in (0,1)$, for the fractional Sobolev space $W^{s,2}$, which is the closure of $C^\infty_0(D)$ in the norm
\begin{equation*}
\|u\|_{H^s(K)}^2 := \|u\|^2_{L^2(K)} + \int_{K^2} \frac{|u(x) - u(y)|^2}{|x-y|^{d+2s}} dx dy.
\end{equation*}
See \cite{DPV12} for more reference on $H^s$. Throughout the paper, $C$ denotes various bounding constants that may change line after line and we say $C$ is universal when it depends only on the parameters in the main assumptions above. For the functions $q, \nu$ and $R$, we use subscript $\eps$ to denote the scaled function, as in $q_\eps(x) = q\left(\frac{x}{\eps}\right)$.

\subsection{Main results}

The first main theorem concerns how the homogenization error scales.

\begin{theorem} \label{t.size}
Let $D \subset \R^d$ be an open bounded domain with $C^2$ boundary $\partial D$, $u^\eps$ and $u$ be the solutions to \eqref{e.rpde} and \eqref{e.hpde} respectively. Suppose that {\upshape[A1][A2][A3]} and {\upshape[S]} hold, $g \in L^2(D)$ and $d = 2,3$. Then, there exists positive constant $C$, depending only on the universal parameters and $\|g\|_{L^2}$, such that
\begin{equation}\label{e.t.size}
\E\,\|u^\eps - u\|^2_{L^2} \le C(\|g\|_{L^2}) \eps^d.
\end{equation}
\end{theorem}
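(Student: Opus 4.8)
The plan is to linearize the difference equation around the homogenized solution $u$, derive the nonlinear analogue of the expansion \eqref{e.lexpan}, and estimate each resulting term in the $L^2(\Om,L^2)$ norm. Writing $v^\eps := u^\eps - u$ and subtracting \eqref{e.hpde} from \eqref{e.rpde}, and using $q_\eps u^\eps - \ol{q}\,u = q_\eps v^\eps + \nu_\eps u$, one gets
\begin{equation*}
-\Delta v^\eps + q_\eps v^\eps + \big(f(u^\eps)-f(u)\big) = -\nu_\eps u .
\end{equation*}
The uniform $L^\infty$ bounds on $u^\eps$ and $u$ from Section \ref{s.homog} confine both solutions to a fixed interval $[-C_0,C_0]$; since $f\in C^2$, I would write $f(u^\eps)-f(u) = f'(u)v^\eps + \rho^\eps$ with $\rho^\eps := \big(\int_0^1(f'(u+tv^\eps)-f'(u))\,dt\big)v^\eps$, which satisfies the pointwise estimate $|\rho^\eps|\le \tfrac12\|f''\|_{L^\infty([-C_0,C_0])}|v^\eps|^2$. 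Moving $\nu_\eps v^\eps$ to the right and inverting the \emph{deterministic} linearized operator $\cG_u := (-\Delta + \ol{q} + f'(u))^{-1}$ (bounded on $L^2$ and mapping $L^1\to L^2$ for $d\le 3$, both by the coercivity \eqref{e.fprime} applied with $\ol q$ in place of $\widetilde q$) yields $v^\eps = -\cG_u(\nu_\eps u) - \cG_u(\nu_\eps v^\eps) - \cG_u(\rho^\eps)$. Substituting this identity once into its own middle term gives the working expansion
\begin{equation*}
v^\eps = -\cG_u(\nu_\eps u) + \cG_u\nu_\eps\cG_u(\nu_\eps u) + \cG_u\nu_\eps\cG_u(\nu_\eps v^\eps) + \cG_u\nu_\eps\cG_u(\rho^\eps) - \cG_u(\rho^\eps),
\end{equation*}
the direct counterpart of \eqref{e.lexpan} with two extra terms carrying the nonlinear remainder $\rho^\eps$.

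The first two terms are essentially the linear ones and I would quote their estimates. For the leading term, $\E\|\cG_u(\nu_\eps u)\|_{L^2}^2$ is an integral of $G_u(x,y)G_u(x,z)u(y)u(z)R_\eps(y-z)$; the change of variables $z=y-\eps w$, together with $\int R = \sigma^2$ from \eqref{e.sigdef} and $\int_D G_u(x,y)^2\,dx\le C$ (finite since $G_u(x,y)\sim|x-y|^{2-d}$ is square integrable for $d\le 3$), produces the order $\eps^d$. For the corrector $\cG_u\nu_\eps\cG_u(\nu_\eps u)$ I would split into mean and fluctuation: the mean is $O(\eps^2)$ (up to a logarithm when $d=2$), hence lower order than $\eps^{d/2}$, while its variance is controlled by the fourth-order moment bound \eqref{e.cumu}, contributing order $\eps^d$; thus $\E\|\cG_u\nu_\eps\cG_u(\nu_\eps u)\|_{L^2}^2\le C\eps^d$.

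The remaining three terms are where the nonlinear structure must be handled, and here I would first record a pathwise energy estimate: testing the difference equation against $v^\eps$ and using that the mean-value form of $f(u^\eps)-f(u)$ plus \eqref{e.fprime} makes the full zeroth-order operator coercive gives $\|v^\eps\|_{H^1}\le C\|\nu_\eps u\|_{H^{-1}}$ for every $\om$. For the third term I would use that $\cG_u\nu_\eps\cG_u$ is Hilbert--Schmidt with $\E\|\cG_u\nu_\eps\cG_u\|_{HS}^2\le C\eps^d$ (a second-moment computation using $\int|R|<\infty$ and $\int_D G_u(x,z)^2\,dx\le C$), so that $\|\cG_u\nu_\eps\cG_u(\nu_\eps v^\eps)\|_{L^2}\le 2M\|\cG_u\nu_\eps\cG_u\|_{HS}\|v^\eps\|_{L^2}$ is of order $\eps^{d/2}$ after the uniform pathwise bound on $\|v^\eps\|_{L^2}$; the fourth term $\cG_u\nu_\eps\cG_u(\rho^\eps)$ is handled the same way, the small operator absorbing the boundedness of $\rho^\eps$. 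The genuinely new term is $\cG_u(\rho^\eps)$, for which no small operator is available: here I would combine $\|\cG_u(\rho^\eps)\|_{L^2}\le C\|\rho^\eps\|_{L^1}\le C\|v^\eps\|_{L^2}^2$ (using $\cG_u:L^1\to L^2$ for $d\le 3$) with the pathwise bound $\|v^\eps\|_{L^2}^2\le C\|\nu_\eps u\|_{H^{-1}}^2$ to get $\E\|\cG_u(\rho^\eps)\|_{L^2}^2\le C\,\E\|\nu_\eps u\|_{H^{-1}}^4$, and then invoke \eqref{e.cumu} to show $\E\|\nu_\eps u\|_{H^{-1}}^4\le C\eps^4$. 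This is a contribution of order $\eps^2$ to $(\E\|v^\eps\|_{L^2}^2)^{1/2}$, hence $o(\eps^{d/2})$ for $d=2,3$. Collecting the five bounds gives $\E\|v^\eps\|_{L^2}^2\le C\eps^d$.

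The main obstacle is precisely the term $\cG_u(\rho^\eps)$, which is quadratic in $v^\eps$ but unaccompanied by any oscillation-induced small operator, so a naive bound produces a factor proportional to $\|v^\eps\|_{L^2}$ that cannot be absorbed. The resolution rests on the two a priori inputs that are new relative to the linear theory: the uniform $L^\infty$ bound, which lets me linearize $f$ with a \emph{uniform} quadratic remainder, and the pathwise energy estimate, which converts the offending $\|v^\eps\|_{L^2}^2$ into the purely oscillatory quantity $\|\nu_\eps u\|_{H^{-1}}^2$ whose fourth moment is of the negligible order $\eps^4$. Verifying $\E\|\nu_\eps u\|_{H^{-1}}^4\le C\eps^4$ (and the analogous fourth-moment and Hilbert--Schmidt computations) via \eqref{e.cumu} is the technical heart of the argument, but each such estimate is a direct adaptation of the linear computations already available under assumption [S].
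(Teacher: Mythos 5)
Your argument is correct, but it takes a noticeably heavier route than the paper's. The paper proves Theorem \ref{t.size} with a two-step corrector argument: it sets $\chi^\eps=-\cG_u\nu_\eps u$, proves $\E\|\chi^\eps\|_{L^2}^2\le C\eps^d$ (your leading-term computation), and then observes that $z^\eps=u^\eps-u-\chi^\eps$ solves the \emph{linear} random equation $-\Delta z^\eps+(q_\eps+h_\eps)z^\eps=\nu_\eps\chi^\eps+(f'(u)-h_\eps)\chi^\eps$, where $h_\eps$ is the difference quotient $(f(u^\eps)-f(u))/(u^\eps-u)$. By [A3] and the uniform $L^\infty$ bound, the potential $q_\eps+h_\eps$ keeps the operator coercive, so its inverse is bounded on $L^2$ by $c^{-1}$ pathwise, and since both right-hand-side terms are pointwise dominated by $C|\chi^\eps|$ one gets $\|z^\eps\|_{L^2}\le C\|\chi^\eps\|_{L^2}$ for every $\om$. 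The entire theorem thus reduces to the single second-moment estimate on $\chi^\eps$; no iteration of the resolvent identity, no fourth moments, and no Hilbert--Schmidt bounds are needed at this stage. You instead invert the \emph{deterministic} operator $\cG_u$, iterate once to reproduce what is essentially the five-term expansion \eqref{e.expan} (which the paper only deploys in Section \ref{s.distr} for the limiting distribution), and must then pay for it: the mean/fluctuation splitting of $\cG_u\nu_\eps\cG_u\nu_\eps u$ and your $\E\|\nu_\eps u\|_{H^{-1}}^4\le C\eps^4$ estimate both require the fourth-moment bound \eqref{e.cumu}, and the third and fourth terms require $\E\|\cG_u\nu_\eps\cG_u\|^2\le C\eps^d$. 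All of these steps do go through (your treatment of the quadratic remainder $\cG_u\rho^\eps$ via the pathwise energy estimate $\|v^\eps\|_{L^2}^2\le C\|\nu_\eps u\|_{H^{-1}}^2$ is a legitimate and rather elegant way to beat the missing small operator, noting only that in $d=2$ the logarithmic Green's function gives $\E\|\nu_\eps u\|_{H^{-1}}^4\le C\eps^4\log^2\eps$, still $o(\eps^2)$), and as a side benefit you have already assembled most of the estimates needed for Theorem \ref{t.sl.23}. But for the size estimate alone, the paper's observation that the randomly linearized operator is uniformly invertible pathwise is the shortcut you did not use.
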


This theorem provides $L^2(\Omega,L^2(D))$ estimate of $u^\eps - u$, and its proof is detailed in section \ref{s.qerror}. So roughly speaking, the size of the homogenization error is of order $\eps^{\frac d 2}$. We hence consider the limiting law of the rescaled random fluctuation $\eps^{-\frac{d}{2}}(u^\eps - u)$, and prove:

\begin{theorem}
\label{t.sl.23}
Suppose that the assumptions in Theorem \ref{t.size} hold. Let $\sigma$ be defined as in \eqref{e.sigdef} and $G_u(x,y)$ be the Green's function defined by the linearized equation around $u$; see \eqref{e.Green} below. Let $W(y)$ denote the standard $d$-parameter Wiener process. Then as $\eps \to 0$, 
\begin{equation}
\label{e.sl.23}
\frac{u^\eps - u}{\sqrt{\eps^d}} \xrightarrow{\rm{distribution}} \sigma \int_D G_u (x,y) u(y) dW(y), \quad \text{ in  } L^2(D).
\end{equation}
\end{theorem}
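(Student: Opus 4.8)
\medskip

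The plan is to reduce the problem to a single leading linear term through an expansion formula, import the distributional convergence of that term from the linear theory, and show that every remaining term---in particular the genuinely new nonlinear ones---is negligible after the singular rescaling by $\eps^{-d/2}$. First I would derive the expansion. Subtracting \eqref{e.hpde} from \eqref{e.rpde} and writing $r^\eps = u^\eps - u$ and $\nu_\eps = q_\eps - \ol q$, the remainder satisfies
\[
L_u r^\eps := (-\Delta + \ol q + f'(u))\, r^\eps = -\nu_\eps u^\eps - N(r^\eps), \qquad N(r^\eps) := f(u^\eps) - f(u) - f'(u) r^\eps,
\]
with $r^\eps \in H^1_0(D)$. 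Assumption [A3] makes $L_u$ coercive, so its Dirichlet solution operator $\cG_u$ (with symmetric kernel $G_u$) is bounded; inserting $u^\eps = u + r^\eps$ gives $r^\eps = -\cG_u(\nu_\eps u) - \cG_u(\nu_\eps r^\eps) - \cG_u N(r^\eps)$, and substituting this identity once more into the middle term yields the analog of \eqref{e.lexpan},
\[
r^\eps = -\cG_u(\nu_\eps u) + \cG_u \nu_\eps \cG_u(\nu_\eps u) + \cG_u \nu_\eps \cG_u(\nu_\eps r^\eps) + \cG_u \nu_\eps \cG_u N(r^\eps) - \cG_u N(r^\eps).
\]
Here $-\cG_u(\nu_\eps u)$ is the leading term; the four others must be shown to be $o(\eps^{d/2})$ in probability in $L^2(D)$.

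Second, for the leading term I would prove
\[
X^\eps := -\frac{1}{\sqrt{\eps^d}}\,\cG_u(\nu_\eps u) \xrightarrow{\text{distribution}} \sigma \int_D G_u(\cdot,y)\, u(y)\, dW(y) \quad \text{in } L^2(D),
\]
the sign being immaterial since the limit is a centered (hence symmetric) Gaussian. For each $\phi \in L^2(D)$ one has $\langle X^\eps,\phi\rangle = -\eps^{-d/2}\int_D (\cG_u\phi)(y)\, u(y)\, \nu(y/\eps)\, dy$, a rescaled average of the mixing field $\nu$ against the fixed weight $(\cG_u\phi)\,u$, whose variance converges to $\sigma^2 \int_D (\cG_u\phi)(y)^2 u(y)^2\, dy$ through the approximate-identity computation $\eps^{-d}\iint \Phi(y)\Phi(z) R((y-z)/\eps)\, dy\, dz \to \sigma^2 \int \Phi^2$. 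Finite-dimensional distributional convergence to the corresponding Gaussian then follows from the central limit theorem for $\varrho$-mixing fields under [S], while tightness of the laws in $L^2(D)$ follows from a uniform bound $\sup_\eps \E\|X^\eps\|_{H^s(D)}^2 < \infty$ for some $s \in (0,1)$ and the compact embedding $H^s \hookrightarrow\hookrightarrow L^2$; the covariance kernel of $X^\eps$ converges to $\sigma^2\int_D G_u(x,y)G_u(x',y)u(y)^2\, dy$, which is exactly the covariance of the stated Wiener-integral limit. These steps mirror the linear theory of \cite{B-CLH-08,BJ-CMS-11,Jing15}, and I would cite rather than reprove them.

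Third---the new part---I would control the four lower-order terms. The linear corrections $\cG_u\nu_\eps\cG_u(\nu_\eps u)$ and $\cG_u\nu_\eps\cG_u(\nu_\eps r^\eps)$ are treated exactly as in the linear case: their means are $o(\eps^{d/2})$ in $d=2,3$ because of the mild singularity of $G_u$, and their fluctuations about the mean are $o(\eps^{d/2})$ after invoking the fourth-moment bound \eqref{e.cumu}. For the nonlinear terms I would use the uniform $L^\infty$ and $H^1$ bounds of Section \ref{s.homog} together with $f \in C^2$: since $u^\eps$ and $u$ take values in a fixed compact interval, $\|f''\|$ is bounded there and $|N(r^\eps)| \le C |r^\eps|^2$ pointwise. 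The apparent difficulty---and what I expect to be the main obstacle---is that $\eps^{-d/2}\|N(r^\eps)\|$ is not obviously small, since $\|(r^\eps)^2\|$ is governed by $\|r^\eps\|_{L^4}^2$ rather than $\|r^\eps\|_{L^2}^2$, so the naive bound only gives an $O(1)$ quantity after rescaling. I would resolve this through the smoothing of $\cG_u$ and interpolation: in $d=3$, for example, $\cG_u : L^{6/5} \to L^6 \hookrightarrow L^2$, and with $\|r^\eps\|_{L^{12/5}} \le C\|r^\eps\|_{L^2}^{3/4}\|r^\eps\|_{H^1}^{1/4}$ and the uniform $H^1$ bound one obtains $\|\cG_u N(r^\eps)\|_{L^2} \le C\|r^\eps\|_{L^2}^{3/2}$, whence $\eps^{-d/2}\E\|\cG_u N(r^\eps)\|_{L^2} \le C\eps^{-3/2}(\E\|r^\eps\|_{L^2}^2)^{3/4} \le C\eps^{3/4} \to 0$ by Theorem \ref{t.size}; the term $\cG_u\nu_\eps\cG_u N(r^\eps)$ is smaller still because of the extra factor $\nu_\eps$, and $d=2$ is easier. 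Finally I would conclude by a Slutsky-type argument: the leading term converges in distribution to the claimed Gaussian and the four corrections tend to $0$ in probability in $L^2(D)$, so $\eps^{-d/2}(u^\eps - u)$ converges in distribution to the same limit.
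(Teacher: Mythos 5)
Your proposal is correct and follows the same overall architecture as the paper: the same expansion formula (your display is exactly \eqref{e.expan}), the leading term and the two quadratic-in-$\nu_\eps$ terms imported from the linear theory of \cite{B-CLH-08,BJ-CMS-11,Jing15}, and a Slutsky-type conclusion. The genuine difference is in how you kill the first nonlinear term $\cG_u N(r^\eps)$ with $N(r^\eps)=f(u^\eps)-f(u)-f'(u)r^\eps$. You correctly identify that the naive $L^2$ bound $\|N(r^\eps)\|_{L^2}\le C\|r^\eps\|_{L^2}$ (via the uniform $L^\infty$ bound) only yields $O(\eps^{d/2})$, i.e.\ $O(1)$ after rescaling, and you resolve this by the smoothing $\cG_u:L^{6/5}\to L^6$ together with Gagliardo--Nirenberg interpolation and the uniform $H^1$ bound, obtaining $\eps^{-d/2}\E\|\cG_u N(r^\eps)\|_{L^2}\le C\eps^{3d/4-d/2}\to 0$; the exponents check out in both $d=2$ and $d=3$, so this gives convergence to zero in $L^1(\Om,L^2(D))$, which is stronger than needed. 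The paper instead never bounds $\|\eps^{-d/2}\cG_u N(r^\eps)\|_{L^2}$ in mean: it verifies the two conditions of the Prohorov criterion (Theorem \ref{t.criterion}) separately, using $|\langle \cG_u\varphi, N(r^\eps)\rangle|\le \|\cG_u\varphi\|_{L^\infty}\|N(r^\eps)\|_{L^1}\le C\|r^\eps\|_{L^2}^2$ for the inner products and the kernel estimate \eqref{e.Hs.key} for a uniform $H^s$ bound giving tightness. Your route buys a stronger (norm) convergence and avoids a separate tightness argument for this term, at the cost of invoking $L^p\to L^q$ mapping properties of $\cG_u$ and the interpolation inequality; the paper's route needs only the $L^\infty$ bound on $\cG_u\varphi$ and the $H^s$ Green's-function estimate. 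One small looseness: for $\cG_u\nu_\eps\cG_u N(r^\eps)$, ``smaller because of the extra factor $\nu_\eps$'' is not quite the mechanism --- the gain comes from the operator-norm estimate $\E\|\cG_u\nu_\eps\cG_u\|_{L^2\to L^2}^2\le C\eps^d$ of \eqref{e.GnuG} combined with $\|N(r^\eps)\|_{L^2}\le C\|r^\eps\|_{L^2}$, exactly as for the term $\cG_u\nu_\eps\cG_u\nu_\eps(u^\eps-u)$; since you already use \eqref{e.GnuG} for that term, this is a presentational rather than a substantive gap.
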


The proof can be found towards the end of section \ref{s.distr}.

\begin{remark}\label{r.Gaussian}
The integral on the right hand side of \eqref{e.sl.23} is understood, for each fixed $x$, as Wiener integral in $y$ with respect to the multiparameter Wiener process $W(y)$. Let $X$ denote this integral. For $d = 2,3$, because the Green's function $G_u(x,y)$ is square integrable (see below), $X$ is a random element in $L^2(D)$. In particular, for any $\varphi \in L^2(D)$, the inner product $\langle \varphi, X\rangle$ has precisely the Gaussian distribution $\mathcal{N}(0,\sigma_\varphi^2)$ with mean zero and variance
\begin{equation}
\sigma_\varphi^2 := \sigma^2 \int_D (\cG_u \varphi)^2 u^2 dy.
\end{equation}
\end{remark}

\begin{remark}\label{r.hypo} We make some comments on the main assumptions of this paper. In view of the variational formulation of \eqref{e.rpde}, assumption [A3] is natural because it guarantees that the minimizing functional is coercive and convex, as we show in the next section. The full zero order term $f^\eps(u) : = q_\eps u + f(u)$ can be viewed as a reaction-diffusion nonlinearity. As an example, when $f$ is of bistable type, say $f(u) = u(u-1)(u-\theta)$ for some $\theta \in (0,1)$, and $q$ is large enough, all of the requirements in [A3] are satisfied.

As pointed out earlier, the assumption [S] imposes stronger decay rate on the correlation function $R$ than it being integrable. This stronger condition is mainly imposed to simplify the control of the fourth order moment $\Psi_\nu$ in \eqref{e.cumu}. We refer to \cite{BJ-CMS-11} for an alternative way to control terms like $\Psi_\nu$, which allows decay rate $|R(x)| \lesssim |x|^{-\alpha}$, $\alpha > d$, at the cost that there are more pairs of products of controlling functions on the right hand side of \eqref{e.cumu}.

We state and prove the main theorems assuming that the random potential $q(x,\om)$ has short range correlation. Nevertheless, our approach also works for some long range correlation setting; see the last section of this paper.
\end{remark}

\section{Homogenization and Uniform Estimates}
\label{s.homog}

\subsection{Well-posedness and uniform estimates}

We record here first the well-posedness result for the heterogeneous semilinear equation \eqref{e.rpde}.

\begin{lemma}\label{l.well}
Under the assumptions [A2][A3], for each fixed $\om\in \Om$ and $\eps \in (0,1)$, $g \in H^{-1}$, there exists a unique weak solution in $H^1_0(D)$ to \eqref{e.rpde}. Moreover, there exists $C > 0$ independent of $\eps$ and $\om$, and
\begin{equation}
\label{e.l.well}
\|u^\eps\|_{H^1_0} \le C(\|g\|_{H^{-1}}).
\end{equation}
\end{lemma}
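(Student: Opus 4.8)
\textit{Proof proposal.} The plan is to treat \eqref{e.rpde} variationally and obtain the solution as the unique minimizer of the energy functional
\begin{equation*}
J(v) = \frac12 \int_D |\nabla v|^2\,dx + \frac12 \int_D q_\eps v^2\,dx + \int_D F(v)\,dx - \langle g, v\rangle, \qquad F(s) = \int_0^s f(t)\,dt,
\end{equation*}
over $v \in H^1_0(D)$, whose Euler--Lagrange equation is exactly \eqref{e.rpde}. First I would check that $J$ is well defined and of class $C^2$ on $H^1_0(D)$: the growth bound in [A2], $|f(s)| \le C(1+|s|^\gam)$, gives $|F(s)| \le C(1+|s|^{\gam+1})$, and since $\gam+1 < 2^*$ (this is where the subcriticality $\gam < 5$ in $d=3$ is used, while $d=2$ is unconstrained), the Sobolev embedding $H^1_0(D) \hookrightarrow L^{\gam+1}(D)$ makes $\int_D F(v)\,dx$ finite and continuous, with $q_\eps$ bounded by $M$.

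The key step is to prove that $J$ is \emph{uniformly convex} on $H^1_0(D)$ with a constant independent of $\eps$ and $\om$. Computing the second variation gives $J''(v)[w,w] = \int_D |\nabla w|^2 + \int_D (q_\eps + f'(v))w^2$. To exploit the pointwise lower bound [A3], namely $q_\eps + f'(v) \ge c - \lambda_1$, I would split off a fraction of the Dirichlet energy and use the eigenvalue inequality $\int_D |\nabla w|^2 \ge \lambda_1 \int_D w^2$: assuming without loss of generality $c \le 2\lambda_1$, one writes $\int |\nabla w|^2 \ge \tfrac{c}{2\lambda_1}\int |\nabla w|^2 + (1-\tfrac{c}{2\lambda_1})\lambda_1 \int w^2$, so that
\begin{equation*}
J''(v)[w,w] \ge \frac{c}{2\lambda_1}\int_D |\nabla w|^2 + \Big(\lambda_1 - \frac c2 + c - \lambda_1\Big)\int_D w^2 = \frac{c}{2\lambda_1}\int_D |\nabla w|^2 + \frac c2 \int_D w^2 \ge c_0 \|w\|_{H^1_0}^2,
\end{equation*}
with $c_0 > 0$ depending only on $c$ and $\lambda_1$, hence universal. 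This uniform convexity yields coercivity of $J$ (after absorbing $\langle g,v\rangle$ by Young's inequality), so the direct method of the calculus of variations produces a minimizer: the gradient term is weakly lower semicontinuous, while $\int q_\eps v^2$ and $\int F(v)$ are weakly continuous by the compact embeddings $H^1_0 \hookrightarrow L^2$ and $H^1_0 \hookrightarrow L^{\gam+1}$. Strict convexity gives uniqueness.

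For the uniform bound \eqref{e.l.well}, I would test the weak formulation against $u^\eps$ itself, equivalently use the strict monotonicity between $u^\eps$ and $0$. Writing $f(u^\eps) - f(0) = \big(\int_0^1 f'(t u^\eps)\,dt\big)u^\eps$ and combining with the $q_\eps(u^\eps)^2$ term, the same eigenvalue-splitting estimate gives a lower bound $c_0\|u^\eps\|_{H^1_0}^2$, while the right-hand side is controlled by $(\|g\|_{H^{-1}} + C|f(0)|)\|u^\eps\|_{H^1_0}$; dividing yields $\|u^\eps\|_{H^1_0} \le C(\|g\|_{H^{-1}})$ with $C$ universal, since $|f(0)|$ and $c_0$ are. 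The main obstacle is conceptual rather than technical: reconciling the pointwise condition [A3] (which only controls $q_\eps + f'$ from below by the possibly negative quantity $c-\lambda_1$) with genuine $H^1$-coercivity requires using the sharp eigenvalue constant in exactly the right proportion, retaining a strictly positive share of the Dirichlet energy; the remaining verifications (well-posedness of the Nemytskii terms and weak continuity under subcritical growth) are routine. I note that an alternative route via the Browder--Minty theory for the strictly monotone, coercive operator $\langle A(u),v\rangle = \int \nabla u\cdot\nabla v + \int q_\eps uv + \int f(u)v$ gives the same conclusion.
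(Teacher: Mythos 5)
Your proposal is correct, and the existence/uniqueness half is essentially the paper's argument: both treat \eqref{e.rpde} as the Euler--Lagrange equation of the energy functional and both extract coercivity and strict convexity from [A3] by splitting off a fraction $\alpha$ of the Dirichlet energy and converting the rest into an $L^2$ term via the Poincar\'e constant $\lambda_1$ (your choice $\alpha = c/(2\lambda_1)$ is the paper's ``choose $\alpha$ small so that $(1-\alpha)\lambda_1 + \widetilde q + f'(s) \ge c/2$'' in explicit form). One cosmetic caveat: [A2][A3] give no upper growth bound on $f'$, so it is not immediate that $J$ is $C^2$ on $H^1_0(D)$ with $J''(v)[w,w]=\int|\nabla w|^2+\int(q_\eps+f'(v))w^2$ a well-defined bilinear form; the paper sidesteps this by integrating the pointwise inequality in $s$, which shows the scalar integrand $\frac12((1-\alpha)\lambda_1+q)s^2+F(s)$ is convex and bounded below by $\frac c4 s^2+f(0)s$ --- your inequality is recovered without differentiating the functional twice. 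Where you genuinely diverge is the uniform bound \eqref{e.l.well}: you test the weak formulation against $u^\eps$ and write $f(u^\eps)u^\eps = f(0)u^\eps + \bigl(\int_0^1 f'(tu^\eps)\,dt\bigr)(u^\eps)^2$, absorbing the quadratic part by the same eigenvalue splitting; the paper instead stays at the variational level, comparing $I^\eps[u^\eps]\le I^\eps[u_g]$ with the auxiliary solution of $-\Delta u_g = 2g$ and using the coercivity lower bound on $I^\eps[u^\eps]$. Your energy estimate is more direct and gives the linear dependence $\|u^\eps\|_{H^1_0}\lesssim \|g\|_{H^{-1}}+|f(0)|$ cleanly, at the price of having to justify that $u^\eps$ is an admissible test function and that $\int f(u^\eps)u^\eps$ is finite --- which does hold, since $\gamma+1\le 2^*$ under [A2], so this is a legitimate alternative; the paper's comparison argument avoids the test-function issue entirely by never leaving the minimization framework.
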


This result is classical; nonetheless, we briefly outline the proof for the sake of completeness.

\begin{proof}
For fixed $\om$ and $\eps$, one can recast \eqref{e.rpde} as the Euler-Lagrange equation associated to the variation problem of minimizing the following (nonlinear) energy functional 
\begin{equation}
\label{e.rvar}
I^\eps[v] :=  \int_D \frac{1}{2} |\nabla v|^2 + \frac{1}{2} q\left(\frac{x}{\eps},\om\right) v^2 + F(v) - v g \ dx,
\end{equation}
over the set $H^1_0(D)$, where $F$ is an antiderivative of $f$, e.g. $F(v) = \int_0^v f(s) ds$. 

The assumption $|f(u)| \le C(1+|u|^\gam)$ implies that $|F(u)| \le C(1+|u|^{\gam + 1})$. By Sobolev embedding, if $d = 2$, $H^1(D)$ is embedded in $L^p(D)$ for any $p \in [1,\infty)$ and if $d = 3$, $H^1(D)$ is embedded in $L^6(D)$ which is included in $L^{\gam + 1}$ since $\gam \le 5$. In both cases, $F(v)$ is integrable. So, the integral above is well defined on $H^1_0(D)$. 

The assumption $\lambda_1 + \widetilde{q} + f'(s) \ge c > 0$ for all $s \in \R$ further implies that $I^\eps$, as a functional on $H^1_0(D)$, is strictly convex. Moreover, we can choose $\alpha > 0$ small so that $(1-\alpha)\lambda_1 + \widetilde{q} + f'(s) \ge \frac{c}{2} > 0$ still holds for all $s$ and $\om$. Integrate in $s$ on both sides of this inequality; we get
\begin{equation*}
\frac{1}{2} \left( (1-\alpha) \lambda_1 + q_\eps(x,\om) \right) s^2 + F(s) \ge \frac{c}{4}s^2 + f(0) s.
\end{equation*}
Using this fact and the Poincar\'e inequality $\|\nabla v\|_{L^2}^2 \ge \lambda_1 \|v\|_{L^2}^2$, we get
\begin{equation}
\label{e.coercive}
\begin{aligned}
\int_D \frac{1}{2}|\nabla v|^2 + \frac{1}{2} q_\eps(x,\om) v^2 + F(v) \, dx &\ge \int_D \frac{\alpha}{2}|\nabla v|^2 + \frac{c}{4} |v|^2 \, dx  + \int_D f(0) v \, dx\\
&\ge \int_D \frac{\alpha}{2}|\nabla v|^2 + \frac{c}{8} |v|^2\, dx - \frac{2|f(0)|^2}{c}|D|,
\end{aligned}
\end{equation}
where $|D|$ is the volume of the domain $D$. This shows that $I^\eps$ is also bounded from below. As a result, \eqref{e.rvar} and hence \eqref{e.rpde} admits a unique solution for each $\eps$ and $\om$.

For the uniform $H^1$ bound, we solve, any $g \in H^{-1}(D)$, the deterministic Poisson problem
\begin{equation*}
-\Delta u_g = 2g, \quad \text{ in } D,
\end{equation*}
with boundary condition $u_g = 0$ on $\partial D$. Clearly we have $\|u_g\|_{H^1} \le C\|g\|_{H^{-1}}$ for some universal constant $C$. The weak formulation of $u_g$ then yields,
\begin{equation*}
I^\eps[u_g] =\int_D \frac{1}{2} |\nabla u_g|^2 + \frac{1}{2} q\left(\frac{x}{\eps},\om\right) (u_g)^2 + F(u_g) - g u_g \ dx = \frac{1}{2} \int_D q_\eps(x,\om) u^2_g dx + \int_D F(u_g) dx. 
\end{equation*}
Now we compare $I^\eps[u_g]$ and $I^\eps[u^\eps]$. Note that \eqref{e.coercive} implies 
\begin{equation*}
I^\eps[u^\eps] \ge \frac{1}{2} \alpha \int_D |\nabla u^\eps|^2 \, dx -\int_D g u^\eps - C.
\end{equation*}
Using $I^\eps[u^\eps] \le I^\eps[u_g]$ and then H\"older's and Young's inequalities, we have
\begin{equation*}
\frac{1}{2} \alpha \int_D |\nabla u^\eps|^2 \le \frac{1}{2} \int_D q_\eps u^2_g dx + \int_D F(u_g) dx + 4C(c',\lambda_1) \|g\|_{H^{-1}}^2 + \frac{1}{4} \alpha \int_{D} |\nabla u^\eps|^2 + C,
\end{equation*}
for some big constant $C$. By [A2], the integral of $q_\eps u^2_g$ is bounded by $C\|u_g\|_{L^2}^2$, which is further dominated by $C\|g\|_{H^{-1}}^2$. In view of the growth condition on $F$ and the Sobolev embedding as before, we get $\|F(u_g)\|_{L^1} \le C(\|u_g\|_{H^1}) = C(\|g\|_{H^{-1}})$ for some $C$ depending also on the parameters in the main assumptions but is uniform in $\eps$ and $\om$. The desired result then follows.
\end{proof}

Next, we show that the solution $u^\eps \in L^\infty(D)$, and the bound on $\|u^\eps\|_{L^\infty}$ can be made independent of $\eps$ or $\om$. 

\begin{lemma}\label{l.Linf} Assume that [A2][A3] hold. For any fixed $\eps \in (0,1)$ and $\om \in \Om$, let $u^\eps(\cdot,\om)$ be the unique solution of \eqref{e.rpde}. Then for any $g \in L^2(D)$, there exists $C>0$ independent of $\eps$ and $\om$ such that
\begin{equation}
\|u^\eps\|_{L^\infty} \le C(c,M,\|g\|_{L^2}).
\label{e.Linf}
\end{equation}
\end{lemma}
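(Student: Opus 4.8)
The plan is to recast this semilinear problem as a \emph{linear} equation with a nonnegative zeroth order coefficient and an $L^2$ right-hand side, and then to invoke the classical De Giorgi--Stampacchia truncation method, whose constant depends only on the universal parameters and $\|g\|_{L^2}$. The essential difficulty is that $f$ may grow like $|s|^{\gam}$ with $\gam$ up to the critical Sobolev exponent $2^\ast-1=5$ when $d=3$, so that $f(u^\eps)$ is only controlled a priori in $L^{6/5}$ and a naive elliptic bootstrap does not close. The whole point is that assumption [A3] provides a \emph{one-sided} (lower) bound on the effective potential, which is exactly what is needed to make the superlinear term drop out with a favorable sign.

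First I would linearize. Writing $f(u^\eps)=f(0)+b(x)\,u^\eps$ with $b(x):=\int_0^1 f'(\tau u^\eps(x))\,d\tau$ (well defined since $u^\eps\in H^1_0\hookrightarrow L^6$ and $f\in C^2$ has controlled growth), equation \eqref{e.rpde} becomes the linear Dirichlet problem $-\Delta u^\eps + V u^\eps = \tilde g$ in $D$, with $V:=q_\eps+b$ and $\tilde g:=g-f(0)\in L^2$. The crucial observation is that, applying [A3] at $s=\tau u^\eps(x)$ and averaging over $\tau\in[0,1]$, one gets $V(x)=q_\eps(x)+\int_0^1 f'(\tau u^\eps)\,d\tau\ge c-\lambda_1$ for a.e.\ $x$: thus $V$ is bounded below by a fixed constant independent of $\eps$, $\om$ and of $\|u^\eps\|_{L^\infty}$ (it may be large and positive, but that direction is harmless). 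Setting $\mu_0:=(\lambda_1-c)^+<\lambda_1$ and $W:=V+\mu_0\ge 0$, I rewrite the equation as $-\Delta u^\eps + W u^\eps = G$ with $G:=\tilde g+\mu_0 u^\eps$. By the uniform bound of Lemma \ref{l.well} (together with $\|g\|_{H^{-1}}\le C\|g\|_{L^2}$), the right-hand side satisfies $\|G\|_{L^2}\le C(\|g\|_{L^2})$ uniformly in $\eps$ and $\om$.

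Next I would run the truncation. For $k\ge 0$, test the weak form against $v_k:=(u^\eps-k)^+\in H^1_0(D)$; on $A_k:=\{u^\eps>k\}$ one has $u^\eps>0$, $v_k\ge 0$ and $W\ge 0$, so the zeroth order term $\int_{A_k}W u^\eps v_k$ is nonnegative and may be discarded, leaving $\int_D|\nabla v_k|^2\le \int_{A_k}G\,v_k$. This is precisely the hypothesis of the standard $L^\infty$ estimate for $-\Delta$ with right-hand side in $L^q$, $q>d/2$; since $d\le 3$ we have $G\in L^2$ with $2>d/2$, so Sobolev's inequality $\|v_k\|_{L^{2^\ast}}\lesssim\|\nabla v_k\|_{L^2}$ and H\"older's inequality on $A_k$ give $\|\nabla v_k\|_{L^2}\le C\|G\|_{L^2}|A_k|^{\frac12-\frac1{2^\ast}}$, hence the super-linear recursion $|A_h|\le C(h-k)^{-2^\ast}|A_k|^{\frac{2^\ast}{2}-1}$ with exponent $\frac{2^\ast}{2}-1>1$ for $d=3$ (for $d=2$ one replaces $2^\ast$ by any finite exponent $>4$). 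Stampacchia's lemma then yields $\operatorname{ess\,sup}_D u^\eps\le C(c,M,\|g\|_{L^2})$. Applying the same argument to $-u^\eps$, which solves $-\Delta(-u^\eps)+W(-u^\eps)=-G$ with $W\ge 0$ unchanged, bounds $\operatorname{ess\,sup}_D(-u^\eps)$, and the two estimates combine to give \eqref{e.Linf}.

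The main obstacle, and the place where the hypotheses are genuinely used, is the critical growth of $f$, which forbids controlling $f(u^\eps)$ by its size. The linearization converts the offending term into $W u^\eps$, carrying the factor $u^\eps$ of definite sign on the truncation set, and the lower bound $V\ge c-\lambda_1$ coming from [A3] (rather than any two-sided bound on $f'$) is exactly what makes $W\ge 0$ after the shift; the shift by $\mu_0$ costs only the term $\mu_0 u^\eps$ on the right-hand side, which is absorbed using the a priori $H^1$ bound alone. In this way the superlinear nonlinearity is never estimated in magnitude, and all constants inherit their uniformity in $\eps$ and $\om$ from $M$, $c$, $\lambda_1$, $|D|$ and $\|g\|_{L^2}$.
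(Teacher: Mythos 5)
Your proof is correct, but it takes a genuinely different route from the paper's. The paper treats $f(u^\eps)$ as a right-hand side: it writes the equation as $-\Delta u^\eps = g - f(u^\eps) - q_\eps u^\eps$, controls $f(u^\eps)$ in $L^{r/\gam}$ via Sobolev embedding and the growth condition of [A2], and invokes $L^p$-based elliptic regularity (Theorems 4.2.2 and 4.2.3 of Cazenave), running a Brezis--Kato type bootstrap when $d=3$ and $\gam\in(4,5)$; the strict subcriticality $\gam<5$ is essential there, since the gain factor $\theta=\tfrac{d}{d\gam-2r}$ degenerates to $1$ as $\gam\to 5$. You instead exploit the structural assumption [A3]: by writing $f(u^\eps)-f(0)=\bigl(\int_0^1 f'(\tau u^\eps)\,d\tau\bigr)u^\eps$ you absorb the nonlinearity into a zeroth-order coefficient bounded below by $c-\lambda_1$, shift it to be nonnegative at the cost of an $L^2$ term controlled by Lemma \ref{l.well}, and run De Giorgi--Stampacchia truncation. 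Your computation of the lower bound on $V$, the recursion exponent $\tfrac{2^\ast}{2}-1=2>1$ for $d=3$, and the treatment of $-u^\eps$ are all sound; the only point worth making explicit is that the weak formulation may be tested against $(u^\eps-k)^+\in H^1_0$ because $f(u^\eps)\in L^{2d/(d\gam-2\gam+2\gam/d)}\subset L^{(2^\ast)'}\subset H^{-1}$, which is where the growth bound $\gam\le 2^\ast-1$ is still needed. What your approach buys is that it never estimates the size of $f(u^\eps)$ beyond this duality pairing, avoids the bootstrap entirely, and would survive at the critical exponent $\gam=5$; what the paper's approach buys is that it only uses the growth condition [A2] on $f$ itself (not the sign condition on $f'$ from [A3]) for this particular lemma, so it is the more natural argument when the nonlinearity is subcritical but not monotone relative to the potential.
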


\begin{proof} We use the following results from elliptic regularity theory: 

(i) Suppose that $v$ solves the linear equation
\begin{equation*}
-\Delta v = h \quad \text{ in } D,
\end{equation*} 
with Dirichlet boundary condition $v = 0$ on $\partial D$, $h \in L^p$ with $p > \max \{1,\frac{d}{2}\}$ and $v \in L^r$ for some $r \in [1,\infty)$, then it holds that $\|v\|_{L^\infty} \le C(\|h\|_{L^p}+\|v\|_{L^r})$ with $C$ also depending on $r$, $p$ and $D$. We refer to \cite[Theorem 4.2.2]{Cazenave_SLE} for the proof of this result.

(ii) Suppose $v$ satisfies $-\Delta v + v = h$ in $D$ with Dirichlet boundary condition on $\partial D$, $h \in H^{-1}(D)$ and $h \in L^p(D)$ for some $p \in (1,\frac{d}{2})$ and $d = 3$. Then there exists $C(p,d,D)$ such that $\|v\|_{L^{\frac{dp}{d-2p}}} \le C\|h\|_{L^p}$. We refer to \cite[Theorem 4.2.3]{Cazenave_SLE} for the proof of this result.

To prove Lemma \ref{l.Linf}, first consider the simple situation of $d = 2$ and $\gamma > 1$ arbitrary in [A2], or $d = 3$ and $1 < \gamma \le 4/(d-2) = 4$. For notational simplicity, we write $u$ for $u^\eps$. Recast the semilinear equation \eqref{e.rpde} into the form of $-\Delta u = h$ with $h = g - f(u) - q_\eps(x,\om) u$. Now assumption [A2] imposes that $\|q_\eps u\|_{L^p} \le C\|u\|_{L^p}$ uniformly in $p$, $\eps$ and $\om$. If $d = 2$, then by Sobolev embedding and the growth condition on $f$, we verify that $h \in L^p$ for all $p \in [1,\infty)$. If $d = 3$ and $\gamma \le 4$, then by Sobolev embedding, $u \in L^{r}$ with $r = \frac{2d}{d-2}$ and $r > \frac{d\gamma}{2}$. Then $f(u) \in L^{\frac r \gamma}$ and $\frac{r}{\gamma} > \frac{d}{2}$. Note also that $g \in L^2$, $q_\eps u \in L^2$ and $2 > \frac{d}{2}$ for $d = 2,3$; we hence verify that $h \in L^p$ for $p > \frac{d}{2}$. Applying regularity theory (i), we conclude that
\begin{equation*}
\|u\|_{L^\infty} \le C(c,M,\|g\|_{L^2},\|u\|_{H^1}) \le C(c,M,\|g\|_{L^2}).
\end{equation*}

For the more general case, $d = 3$ and $\frac{2d}{d-2} \le \frac{d\gam}{2}$ (i.e. $\gam \in (4,5)$), we need a bootstrap argument. Here, we mimic the proof of Theorem 4.4.1 in Cazenave \cite{Cazenave_SLE}. The semilinear equation \eqref{e.rpde} can also be recast as $-\Delta u + u = b$ with $b(x) = g(x) + (1-q_\eps(x,\om)) u - f(u)$. Again the function $g$ and $(1-q_\eps) u$ are in $L^p$ with $p> \frac{d}{2}$ with uniform bounds, so we only need to take care of the nonlinear term $f(u)$. To start, we set $r = \frac{2d}{d-2}$ and we have $u \in L^r$ and we note
\begin{equation*}
\gamma < r < \frac{d \gamma}{2}, \quad \gamma - \frac{2r}{d} = \gamma - \frac{4}{d-2} < \frac{d+2}{d-2} - \frac{4}{d-2} = 1,
\end{equation*}
where we used $\gamma < \frac{d+2}{d-2}$ as in [A2]. In particular, $\theta := \frac{d}{d\gamma - 2r} > 1$. Since $u \in L^r$ and $|f(u)| \le C(1+|u|^\gamma)$, we check that $b \in L^{\frac r \gamma}$ and $\frac{r}{\gamma} \in (1,\frac{d}{2})$. By elliptic regularity (ii), we obtain
\begin{equation*}
\|u\|_{L^{\theta r}} \le C\|b\|_{L^\frac{r}{\gamma}}, \quad \text{since} \quad \theta r = \frac{dr}{d\gamma - 2r} = \frac{d\frac{r}{\gamma}}{d-\frac{2r}{\gamma}}.
\end{equation*}
If $\theta r > \frac{d\gamma}{2}$, we are back to the simple situation. If not, we repeat the argument using elliptic regularity (ii) $k$ more times until $\theta^k r \le \frac{d\gamma}{2} < \theta^{k+1} r$. We then get $u \in L^{\theta^{k+1}r}$ with $\theta^{k+1}r > \frac{d\gamma}{2}$ and hence return to the simple situation, and we can conclude. We further verify that the bounds are uniform in $\eps$ and $\om$ in view of \eqref{e.l.well}.
\end{proof}
\subsection{Homogenization theory}

Because the random coefficients appear only in the zeroth order linear term, i.e. the potential term, the homogenization theory for the equations is relatively straightforward. For the sake of completeness, we present the details here.

\begin{theorem}\label{t.homog} Under assumptions [A1][A2] and [A3], there exists an event $\Om_1 \in \cF$ with full probability measure, such that for all $\om \in \Om_1$, $u^\eps(\cdot,\om)$ converges strongly in $L^2(D)$ and weakly in $H^1_0(D)$, to the deterministic function $u \in H^1_0(D)$ that solves \eqref{e.hpde}
\end{theorem}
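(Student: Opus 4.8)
The plan is to combine the uniform energy bound of Lemma~\ref{l.well} with compactness, the multiparameter ergodic theorem for the oscillating potential, and the uniform $L^\infty$ bound of Lemma~\ref{l.Linf}, in order to pass to the limit in the weak formulation of \eqref{e.rpde}. First I would extract a limit. By Lemma~\ref{l.well} the family $\{u^\eps\}$ is bounded in $H^1_0(D)$ uniformly in $\eps$ and $\om$. Fixing $\om$ for the moment and taking any sequence $\eps_k \to 0$, weak compactness of $H^1_0(D)$ yields a subsequence (not relabeled) and $u^* \in H^1_0(D)$ with $u^{\eps_k} \rightharpoonup u^*$ weakly in $H^1_0$. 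Since $d = 2,3$ and $D$ is bounded with $C^2$ boundary, the Rellich--Kondrachov theorem gives the compact embedding $H^1_0(D) \hookrightarrow\hookrightarrow L^2(D)$, so $u^{\eps_k} \to u^*$ strongly in $L^2(D)$ and, along a further subsequence, a.e. in $D$.

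Second, I would record the homogenization of the potential. By [A1] and the boundedness in [A2], the multiparameter Birkhoff ergodic theorem applied to $\widetilde q(\tau_x\om)$ shows that, for $\om$ in a set $\Om_1 \in \cF$ of full probability, $q_\eps(\cdot,\om) = q(\cdot/\eps,\om) \overset{*}{\rightharpoonup} \ol q$ weak-$*$ in $L^\infty(D)$; that is, $\int_D q_\eps \psi\,dx \to \ol q \int_D \psi\,dx$ for every $\psi \in L^1(D)$. The exceptional null set can be chosen independently of $\psi$: the uniform bound $|q_\eps| \le M$ from [A2] lets one test only against a countable dense family in $L^1(D)$ and then extend by density. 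I fix such an $\om \in \Om_1$ henceforth.

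Third, I would pass to the limit in the weak formulation. For $\phi \in C^\infty_c(D)$,
\begin{equation*}
\int_D \nabla u^{\eps_k}\cdot\nabla\phi + q_{\eps_k} u^{\eps_k}\phi + f(u^{\eps_k})\phi \, dx = \int_D g\,\phi\,dx.
\end{equation*}
The gradient term converges to $\int_D \nabla u^*\cdot\nabla\phi\,dx$ by weak $H^1$ convergence. For the nonlinear term, Lemma~\ref{l.Linf} supplies a bound $\|u^{\eps_k}\|_{L^\infty}\le C$ uniform in $\eps$ and $\om$; since $f$ is continuous and $u^{\eps_k}\to u^*$ a.e. with $|f(u^{\eps_k})|\le \sup_{|s|\le C}|f(s)|$, dominated convergence yields $f(u^{\eps_k})\to f(u^*)$ in $L^2(D)$, hence $\int_D f(u^{\eps_k})\phi\,dx\to\int_D f(u^*)\phi\,dx$. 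The delicate term is $\int_D q_{\eps_k} u^{\eps_k}\phi\,dx$, a product of two merely weakly converging factors; I would split it as
\begin{equation*}
\int_D q_{\eps_k}(u^{\eps_k}-u^*)\phi\,dx + \int_D (q_{\eps_k}-\ol q)\,u^*\phi\,dx,
\end{equation*}
and bound the first integral by $M\|u^{\eps_k}-u^*\|_{L^2}\|\phi\|_{L^2}\to 0$ using the strong $L^2$ convergence, while the second tends to $0$ because $u^*\phi\in L^1(D)$ and $q_{\eps_k}\overset{*}{\rightharpoonup}\ol q$. Thus $u^*$ is a weak solution of \eqref{e.hpde}.

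Finally, since [A2] and [A3] remain valid with $\widetilde q$ replaced by $\ol q$, the homogenized problem \eqref{e.hpde} has a unique weak solution $u$ (Lemma~\ref{l.well} applied to the constant potential $\ol q$). Hence $u^* = u$, and because the limit is the same for every subsequence, the full family $u^\eps(\cdot,\om)$ converges weakly in $H^1_0(D)$ and strongly in $L^2(D)$ to the deterministic function $u$, for every $\om\in\Om_1$. I expect the main obstacle to be precisely the coupling term $q_\eps u^\eps$: neither factor converges strongly on its own, and this is resolved by the compensation between the $L^2$ compactness of $u^\eps$ (Rellich) and the weak-$*$ convergence of $q_\eps$ supplied by ergodicity, while the superlinear nonlinearity is tamed by the $\eps$- and $\om$-uniform $L^\infty$ bound of Lemma~\ref{l.Linf}.
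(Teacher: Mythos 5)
Your proposal is correct and follows essentially the same route as the paper: uniform $H^1$ and $L^\infty$ bounds from Lemmas \ref{l.well} and \ref{l.Linf}, compactness to extract a subsequential limit, the ergodic theorem for the weak convergence of $q_\eps$ to $\ol q$ on a full-measure event, passage to the limit in the weak formulation (with the product $q_\eps u^\eps$ handled by combining strong $L^2$ convergence of $u^\eps$ with weak convergence of $q_\eps$), and uniqueness of the homogenized problem to identify the limit and upgrade to convergence of the full family. The only cosmetic difference is that you treat $f(u^\eps)\to f(u^*)$ via a.e. convergence and dominated convergence, while the paper uses the local Lipschitz bound $|f(u^\eps)-f(u)|\le C_1|u^\eps-u|$ on the range guaranteed by the uniform $L^\infty$ estimate; both rest on the same ingredient.
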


\begin{proof} Owing to Lemma \ref{l.well} and Lemma \ref{l.Linf}, we note there exists $C > 0$ uniform in $\eps$ and $\om$, such that $\|u^\eps\|_{H^1(D)} + \|u^\eps\|_{L^\infty(D)} \le C$. Also, by ergodic theorem (see. e.g. \cite[section 7.1]{Jikov_book}), there exists $\Om_1 \in \cF$ with $\bP(\Om_1) = 1$ such that $q_\eps(x,\om)$ converges weakly in $L^2_{\mathrm{loc}}(\R^d)$ to $\ol{q}$ for all $\om \in \Om_1$. As a consequence, for each fixed $\om \in \Om_1$, we can find a function $u(\cdot,\om) \in H^1_0(D)$ and extract a subsequence $\eps(\om) \to 0$, along which we have
\begin{equation*}
\nabla u^\eps \xrightharpoonup{L^2} \nabla u, \quad u^\eps \xrightarrow{L^2} u, \quad q\left(\frac{\cdot}{\eps},\om\right) \xrightharpoonup{L^2} \ol{q},
\end{equation*}
where $\rightharpoonup$ and $\rightarrow$ denotes, respectively, weak and strong convergence. Owing to the uniform in $\eps$ bound on $u^\eps$ and the regularity assumption $f \in C^2(\R)$,
\begin{equation*}
\left| f(u^\eps) - f(u) \right| \le C_1|u^\eps - u|.
\end{equation*}
Here, $C_1$ is the Lipchitz constant of $f$ inside the domain $[-2C,2C]$ and $C$ is the uniform bound in \eqref{e.Linf}. This implies that $f(u^\eps) \to f(u)$ in $L^2(D)$. As a result, pass to the limit $\eps \to 0$ in the weak formulation
\begin{equation*}
\int_D \nabla u^\eps \cdot \nabla \varphi + q\left(\frac{x}{\eps},\om\right) u^\eps \varphi + f(u^\eps) \varphi - g\varphi \ dx = 0, \quad\quad \text{for all } \varphi \in C^\infty_0(D),
\end{equation*}
we get
\begin{equation*}
\int_D \nabla u \cdot \nabla \varphi + \ol{q} u \varphi + f(u) \varphi - g\varphi \ dx = 0, \quad\quad \text{for all } \varphi \in C^\infty_0(D).
\end{equation*}
This shows that the function $u(\cdot,\om)$ solves the equation \eqref{e.hpde}. Note that this problem has a unique deterministic solution, and hence $u(x,\om) = u(x)$ is independent of $\om$. As a result, for all $\om \in \Om_1$ and along the full sequence $\eps \to 0$, the convergence $u^\eps \to u$ hold. This completes the proof of the theorem.
\end{proof}

\subsection{Green's function estimates for the linearized equation}

Let $u$ be the homogenized solution. The linearized differential operator $\cL_u$, around $u$, of the homogenized semilinear operator $\cL u= -\Delta u+ \ol{q}u + f(u)$ is given by
\begin{equation}
\cL_u (v) = -\Delta v + \left( \ol{q} + f'(u) \right) v
\end{equation}
The Green's function $G_u(x,y)$, $x, y \in D$ and $x \ne y$, associated to $\cL_u$ satisfies
\begin{equation}
\label{e.Green}
\left\{
\begin{aligned}
&-\Delta G_u(x;y) + \left(\ol{q} + f'(u(x))\right) G_u(x;y) = \delta_y, \quad &\quad &\text{ for } x \in D,\\
&G_u(x;y) = 0, \quad &\quad &\text{ for } x \in \partial D.
\end{aligned}
\right. 
\end{equation}
We have the following estimates:

\begin{lemma} Assume [A2][A3], and assume that $D \subset \R^d$ is an open bounded domain with $C^2$ boundary. Then there exists $C > 0$ such that,
\begin{equation}
\label{e.Gbdd}
\left| G_u(x,y) \right| \le
\begin{cases}
\displaystyle \frac{C}{|x-y|^{d-2}} &\text{ for } d = 3\\
\displaystyle C \left(\left|\log|x-y|\right| + 1\right) &\text{ for } d = 2
\end{cases}
\quad \text{ and } \quad \left| \nabla_x G_u(x,y) \right| \le \frac{C}{|x-y|^{d-1}}.
\end{equation}
\end{lemma}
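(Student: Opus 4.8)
The plan is to reduce everything to the classical Dirichlet Green's function of the plain Laplacian and to treat the zeroth-order coefficient as a harmless lower-order perturbation. First I record the two structural facts that make this work. Since the homogenized solution $u$ is bounded in $L^\infty(D)$ (the deterministic analogue of Lemma \ref{l.Linf} applies verbatim to \eqref{e.hpde}, with $\ol q$ in place of $q_\eps$) and $f\in C^2$, the potential $V(x):=\ol q + f'(u(x))$ is bounded, $\|V\|_{L^\infty(D)}\le \Lambda$ for a constant depending only on the universal parameters. Moreover, [A3] (which still holds with $\ol q$ in place of $\widetilde q$) together with the Poincar\'e inequality makes the bilinear form $\int_D |\nabla v|^2 + V v^2\,dx$ coercive on $H^1_0(D)$, exactly as in \eqref{e.coercive}; hence $\cL_u$ is boundedly invertible, its inverse $\cG_u$ is the solution operator, and $G_u$ is its symmetric integral kernel.

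Let $\Gamma_D(x,y)$ denote the Dirichlet Green's function of $-\Delta$ on $D$. For a bounded $C^2$ domain it is classical that $\Gamma_D$ obeys precisely the bounds asserted in \eqref{e.Gbdd}: by the maximum principle $0\le\Gamma_D(x,y)\le c_d|x-y|^{2-d}$ for $d=3$ and $\Gamma_D(x,y)\le C(|\log|x-y||+1)$ for $d=2$, while $|\nabla_x\Gamma_D(x,y)|\le C|x-y|^{1-d}$. The size bound on $G_u$ follows from a comparison argument. Writing $G_u(\cdot,y)=\Gamma_D(\cdot,y)+w_y$, the remainder solves $\cL_u w_y = -V\,\Gamma_D(\cdot,y)$ with zero boundary data, i.e. $w_y=-\cG_u\bigl(V\Gamma_D(\cdot,y)\bigr)$. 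The source $V\Gamma_D(\cdot,y)$ lies in $L^p(D)$ uniformly in $y$ for every $p<d/(d-2)$ (any finite $p$ when $d=2$), so I may fix $p\in(d/2,\,d/(d-2))$, an interval that is nonempty for $d=2,3$. Elliptic regularity for the coercive operator $\cL_u$ — the $L^p\to L^\infty$ estimate of regularity type (i)/(ii) used in Lemma \ref{l.Linf}, with the bounded term $Vw_y$ absorbed via coercivity — then gives $\|w_y\|_{L^\infty(D)}\le C$ uniformly in $y$. Combined with the $\Gamma_D$ bounds this yields the stated estimate on $|G_u|$.

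For the gradient estimate I use the resolvent identity obtained by applying $(-\Delta)^{-1}_{\rm Dir}$ to $-\Delta G_u(\cdot,y)=\delta_y-VG_u(\cdot,y)$, namely
\begin{equation*}
G_u(x,y)=\Gamma_D(x,y)-\int_D \Gamma_D(x,z)\,V(z)\,G_u(z,y)\,dz .
\end{equation*}
Differentiating in $x$ and using $|\nabla_x\Gamma_D(x,z)|\le C|x-z|^{1-d}$ together with the size bound on $G_u$ proved above, the principal term is $\le C|x-y|^{1-d}$ and the remaining integral is dominated by $\int_D |x-z|^{1-d}|z-y|^{2-d}\,dz$ for $d=3$ (and by $\int_D|x-z|^{-1}(1+|\log|z-y||)\,dz$ for $d=2$). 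A standard convolution-of-singularities estimate bounds the $d=2$ integral uniformly and shows the $d=3$ integral is $O(1+|\log|x-y||)$, the borderline case in which the two exponents sum to $d$. In either case the contribution is dominated by $|x-y|^{1-d}$, giving the claimed gradient bound.

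I expect the main obstacles to be two technical points rather than the overall scheme. The first is making the $L^p\to L^\infty$ elliptic estimate for $\cL_u$ genuinely uniform in the pole $y$, and in particular uniform up to the boundary; this requires combining interior regularity with the classical $C^2$-boundary estimates and checking that the constants depend only on $\Lambda$, $c$ and $D$. The second is the rigorous justification of differentiating the resolvent identity under the integral sign near the diagonal, together with the sharp evaluation of the borderline convolution integral in $d=3$; both are standard but must be handled with care since the integrand is only locally integrable. Alternatively, the entire lemma can be quoted from the general Green's-function estimates of Gr\"uter--Widman for uniformly elliptic operators, of which this is the special case of constant principal part.
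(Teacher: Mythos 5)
Your argument is correct, and it is genuinely more detailed than what the paper does: the paper's ``proof'' of this lemma is essentially a citation, noting that $\ol{q}+f'(u)\in L^\infty(D)$ by \eqref{e.Linf}, that [A3] keeps the problem coercive, and then declaring the size bound ``immediate'' and deducing the gradient bound from standard H\"older (i.e.\ $C^{1,\alpha}$) regularity up to the $C^2$ boundary. Concretely, the implicit argument for the gradient there is a rescaling one: on a ball $B(x,|x-y|/2)$ the function $G_u(\cdot,y)$ solves the homogeneous equation with bounded potential, so interior/boundary gradient estimates give $|\nabla_x G_u(x,y)|\le C|x-y|^{-1}\sup_{B}|G_u(\cdot,y)|+C|x-y|\,\|V G_u(\cdot,y)\|_{L^\infty(B)}\le C|x-y|^{1-d}$ once the size bound is known. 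You instead run a perturbative scheme off the Dirichlet Laplacian: the decomposition $G_u=\Gamma_D+w_y$ with $\cL_u w_y=-V\Gamma_D(\cdot,y)$ and the $L^p\to L^\infty$ estimate (the same regularity facts (i)/(ii) the paper uses in Lemma \ref{l.Linf}) for the size bound, and the resolvent identity plus the convolution-of-singularities estimate for the gradient. Both routes are sound; yours is self-contained and makes the uniformity in the pole $y$ explicit, at the cost of having to handle the borderline integral $\int_D|x-z|^{1-d}|z-y|^{2-d}dz$ in $d=3$ (which you correctly identify as $O(1+|\log|x-y||)$ and hence dominated by $|x-y|^{1-d}$ on a bounded domain), whereas the paper's scaling argument avoids that integral entirely but leans on quoted regularity theory. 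Your closing remark that the whole lemma is a special case of the Gr\"uter--Widman estimates is exactly the kind of reference the paper is implicitly invoking.
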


We note that by \eqref{e.Linf}, the potential function $\ol{q} + f'(u) \in L^\infty(D)$. Moreover, [A3] guarantees that the problem remains elliptic. The first bound in \eqref{e.Gbdd} is immediate and the second one follows, say, from standard H\"older regularity for gradients. Note that this is the place where regularity of $\partial D$ is used.


\section{Quantitative Estimates of the Homogenization Error}
\label{s.qerror}

In this section, we determine the convergence rate of $u^\eps \to u$ in $L^2(\Om,L^2(D))$. Define  $\xi^\eps:=u^\eps - u$. Then, it satisfies
\begin{equation*}
-\Delta \xi^\eps + q_\eps \xi^\eps + f(u^\eps) - f(u) = -\nu_\eps(x,\om) u.
\end{equation*}
Formally, the nonlinear term $f(u^\eps) - f(u)$ is approximated by $f'(u)(u^\eps - u)$. So we expect that the leading term in $\xi^\eps$ is given by the solution of the following equation:
\begin{equation}
\left\{
\begin{aligned}
&\cL_u \chi^\eps = -\Delta \chi^\eps + \ol{q} \chi^\eps + f'(u) \chi^\eps = -\nu_\eps u, &\quad &\quad \text{ in } D,\\
&\chi^\eps = 0, &\quad &\quad\text{ on } \partial D.
\end{aligned}
\right.
\end{equation}
Let $\cG_u$ to be the inverse operator, i.e. the fundamental solution operator of the Dirichlet problem, and $G_u(x,y)$ is the Green's function. In view of [A3], these notations are well defined, and we have
\begin{equation}
\label{e.chi}
\chi^\eps = -\cG_u \nu_\eps u.
\end{equation}

Our goal is to estimate the quantity $\|\xi^\eps\|_{L^2(\Om,L^2(D))}$. First, an estimate for $\chi^\eps$ in $L^2(\Om,L^2(D))$ is easily obtained from the linear theory. Next, by applying the mean value theorem to the nonlinear term in the equation of $\xi^\eps$, we show that the remainder $z^\eps := \xi^\eps - \chi^\eps$ satisfies a linear equation with coefficients that depend on $u^\eps$ and $u$, and with $\chi^\eps$ on the right hand side. Therefore, by linear theory again, we finally obtain estimates for $z^\eps$ and hence for $\xi^\eps$. To use the linear theory for $z^\eps$, however, we need the uniform (in $\eps$ and $\om$) bound on $u^\eps$ because the coefficient of the equation for $z^\eps$ depends on $u^\eps$.

\medskip

We first present the estimates for the corrector $\chi^\eps$ and briefly recall the proof. For detailed argument, we refer to \cite[Lemma 4.1]{Jing15}.

\begin{lemma}\label{l.chi} Let $d = 2,3$. Assume that [A1][A2][A3] and [S] hold. Then there exists $C>0$ such that 
\begin{equation}
\E \|\chi^\eps\|_{L^2}^2 \le C\eps^d.
\end{equation}
\end{lemma}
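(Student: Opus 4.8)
The plan is to bound $\E\|\chi^\eps\|_{L^2}^2$ by computing the second moment explicitly via the Green's function representation \eqref{e.chi} and then controlling the resulting double integral of the correlation function. Writing $\chi^\eps(x) = -\int_D G_u(x,y)\,\nu_\eps(y,\om)\,u(y)\,dy$, I would first take the $L^2(D)$ norm and expectation, and use Fubini (justified since $\nu$ is bounded by [A2] and $G_u \in L^2$ for $d=2,3$ by \eqref{e.Gbdd}) to arrive at
\begin{equation*}
\E\|\chi^\eps\|_{L^2}^2 = \int_D \int_D \int_D G_u(x,y)\,G_u(x,z)\,u(y)\,u(z)\,R_\eps(y-z)\,dy\,dz\,dx,
\end{equation*}
where $R_\eps(y-z) = \E\,\nu_\eps(y)\nu_\eps(z) = R((y-z)/\eps)$ by stationarity and the scaling of $q_\eps$. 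The homogenized solution $u$ is bounded by Lemma \ref{l.Linf}, so the $u(y)u(z)$ factor contributes only a harmless constant.

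The heart of the estimate is the rescaling of the correlation function. After pulling out the uniform bound on $u$, the quantity to control is $\int_D\int_D\int_D |G_u(x,y)||G_u(x,z)|\,|R((y-z)/\eps)|\,dy\,dz\,dx$. I would change variables $z = y + \eps w$ so that $R((y-z)/\eps) = R(-w) = R(w)$ and $dz = \eps^d\,dw$, producing the factor $\eps^d$. What remains is to show that
\begin{equation*}
\int_{\R^d} \int_D \int_D |G_u(x,y)|\,|G_u(x,y+\eps w)|\,dx\,dy\,|R(w)|\,dw
\end{equation*}
stays bounded uniformly in $\eps$. Here I would use that $R \in L^1(\R^d)$ (which follows from [S], as the excerpt notes) together with a uniform-in-$\eps$ bound on the inner double $x,y$ integral. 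The inner integral is controlled by the Green's function estimates \eqref{e.Gbdd}: for $d=3$, $G_u(x,y) \lesssim |x-y|^{-1}$, and the convolution-type estimate $\int_D |x-y|^{-(d-2)}|x-y'|^{-(d-2)}dx$ is bounded by $C|y-y'|^{-(d-4)}$ or better, in any case $G_u(\cdot, y)$ lies in $L^2(D)$ with norm bounded uniformly in $y$, so $\int_D\int_D |G_u(x,y)||G_u(x,y+\eps w)|\,dx\,dy \le C$ independently of $\eps$ and $w$ (using Cauchy–Schwarz in $x$ and $\|G_u(\cdot,y)\|_{L^2(D)} \le C$ uniformly in $y$). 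The logarithmic bound for $d=2$ is even more favorable.

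The main obstacle I anticipate is making the uniform bound on the inner $x,y$ integral genuinely independent of $\eps$ and $w$, since $y+\eps w$ can lie near or outside $\partial D$ and the Green's function estimate is stated for points in $D$. I would handle this by extending the argument so that only the $L^2(D)$ norm of $G_u(\cdot,y)$ enters, which by \eqref{e.Gbdd} is bounded uniformly over $y \in D$ regardless of the second argument, and by restricting the $w$-integration implicitly through the support of $R$ and the bounded geometry of $D$. Since this is precisely the linear-theory computation, the detailed verification is identical to \cite[Lemma 4.1]{Jing15}, and I would simply invoke that reference for the routine estimates while recording the key scaling that yields the factor $\eps^d$.
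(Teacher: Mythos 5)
Your proposal is correct and follows essentially the same route as the paper: expand $\E\|\chi^\eps\|_{L^2}^2$ via the Green's function representation, replace $\E\,\nu_\eps(y)\nu_\eps(z)$ by $R((y-z)/\eps)$, integrate over $x$ first using the uniform $L^2$ bound on $G_u(\cdot,y)$ from \eqref{e.Gbdd}, and then change variables to extract the factor $\eps^d$ from the integrability of $R$ guaranteed by [S]. The details you defer to \cite[Lemma 4.1]{Jing15} are exactly the ones the paper defers there as well.
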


\begin{proof} The mean square of the $L^2(D)$ norm of the function $\chi^\eps(\cdot,\om)$ is given by
\begin{equation*}
\E \|\chi^\eps\|_{L^2(D)}^2 = \E \|\cG_u \nu_\eps u\|_{L^2(D)}^2 = \E \int_{D^3} G_u(x,y) G_u(x,z) \nu_\eps(y) \nu_\eps(z) u(y) u(z) dy dz dx.
\end{equation*}
Note that $\E \nu_\eps(y) \nu_\eps(z) = R^\eps(y-z)$ and use the bounds on the Green's function. We get, for $d = 3$,
\begin{equation*}
\E \|\chi^\eps\|_{L^2(D)}^2 \le \int_{D^3} \frac{C}{|x-y|^{d-2} |x-z|^{d-2}} \left|R\left(\frac{y-z}{\eps}\right)\right| |u(y) u(z)| dy dz.
\end{equation*}
In view of the bound \eqref{e.Gbdd}, the product of $G_u(\cdot,y)$ and $G_u(\cdot,z)$ is integrable on $D$ with bounds independent of $y$ or $z$. Hence, we can integrate over $x$ first and then change variable in the remaining integral, which yields a factor of $\eps^d$ and verifies the desired result. The situation of $d=2$ can be treated in the same manner.
\end{proof}

Next, we move on the remainder $z^\eps = u^\eps - u - \chi^\eps$. In view of the equations satisfied by $u^\eps$, $u$ and $\chi^\eps$, we have
\begin{equation*}
(-\Delta + q_\eps) z^\eps + f(u^\eps) - f(u) = -\nu_\eps \chi^\eps + f'(u) \chi^\eps.
\end{equation*}
Let $h_\eps$ be the function
\begin{equation*}
\frac{f(u^\eps)-f(u)}{u^\eps - u} \mathbf{1}_{\{|u^\eps-u|>0\}}
\end{equation*}
where $\mathbf{1}$ denotes the indicator function. Then, in view of $f\in C^1(\R)$ and the bound \eqref{e.Linf}, we have $h_\eps \in L^\infty(D)$. We verify that $z^\eps$ solves the Dirichlet problem
\begin{equation}
-\Delta z^\eps + q_\eps z^\eps + h_\eps z^\eps = \nu_\eps \chi^\eps + (f'(u) - h_\eps) \chi^\eps,
\end{equation}
with zero boundary condition. Indeed, this problem has unique solution and we verify easily that $u^\eps - u - \chi^\eps$ is the solution. We have the following result.

\begin{lemma}\label{l.z}
Assume that the conditions in Lemma \ref{l.chi} hold. Then there exists $C>0$ such that
\begin{equation}
\E \|z^\eps\|_{L^2}^2 \le C\eps^d.
\end{equation}
\end{lemma}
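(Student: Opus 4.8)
The plan is to treat the equation for $z^\eps$ as a \emph{linear} Dirichlet problem with a (random, oscillatory) bounded zeroth-order coefficient, and to exploit that this operator inherits the uniform coercivity coming from [A3], exactly as in Lemma \ref{l.well}. Writing the problem as
\[
-\Delta z^\eps + (q_\eps + h_\eps)\, z^\eps = r^\eps, \qquad z^\eps|_{\partial D} = 0, \qquad r^\eps := \nu_\eps \chi^\eps + (f'(u) - h_\eps)\chi^\eps,
\]
my goal is to show that (i) the solution operator of $-\Delta + q_\eps + h_\eps$ is bounded $L^2 \to L^2$ with a constant independent of $\eps$ and $\om$, and (ii) $\|r^\eps\|_{L^2} \le C \|\chi^\eps\|_{L^2}$ pointwise in $\om$, again with $C$ uniform. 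Granting these, one gets $\|z^\eps(\cdot,\om)\|_{L^2} \le C\|\chi^\eps(\cdot,\om)\|_{L^2}$ for a.e. $\om$, and squaring and taking expectation reduces the claim to Lemma \ref{l.chi}, which supplies $\E\|\chi^\eps\|_{L^2}^2 \le C\eps^d$.

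For (i), the essential point is the uniform coercivity of the bilinear form $B[v,w] = \int_D \nabla v\cdot\nabla w + (q_\eps + h_\eps)vw\,dx$. Here I would work with the fundamental-theorem-of-calculus representation $h_\eps(x) = \int_0^1 f'(u(x) + t(u^\eps(x) - u(x)))\,dt$, which agrees with the difference quotient defining $h_\eps$ wherever $u^\eps \ne u$ and yields the exact coefficient for which the equation holds everywhere. By the uniform $L^\infty$ bound \eqref{e.Linf} of Lemma \ref{l.Linf}, the arguments $u + t(u^\eps - u)$ all lie in a fixed compact interval independent of $\eps,\om,t$; since $f \in C^1$ this gives $\|h_\eps\|_{L^\infty} \le C$ uniformly, and, more importantly, averaging \eqref{e.fprime} in $t$ yields the pointwise lower bound $\lambda_1 + q_\eps(x,\om) + h_\eps(x) \ge c$. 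Feeding this into the same Poincar\'e and $\alpha$-splitting argument used in \eqref{e.coercive} produces $B[v,v] \ge \tfrac{\alpha}{2}\|\nabla v\|_{L^2}^2 + \tfrac{c}{2}\|v\|_{L^2}^2$ with $\alpha$ uniform, so the solution operator maps $H^{-1}\to H^1_0$, and hence $L^2 \to L^2$, with a uniform bound.

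Step (ii) is then immediate: $|\nu_\eps| \le 2M$ by [A2], while $f'(u)$ and $h_\eps$ are bounded in $L^\infty$ by the uniform control on $u$ and $u^\eps$ together with $f \in C^1$, so $\|r^\eps\|_{L^2} \le C\|\chi^\eps\|_{L^2}$ with $C$ deterministic and uniform in $\eps,\om$. Combining (i) and (ii) gives $\|z^\eps(\cdot,\om)\|_{L^2} \le C\|\chi^\eps(\cdot,\om)\|_{L^2}$ almost surely; squaring, taking $\E$, and invoking Lemma \ref{l.chi} finishes the argument. Note that the smallness is produced entirely by $\chi^\eps$: no cancellation or moment estimate on $\nu_\eps$ is needed here, only its $L^\infty$ bound.

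I expect the only genuine obstacle to be securing the coercivity constant \emph{uniformly in both $\eps$ and $\om$}, which is precisely where Lemma \ref{l.Linf} is indispensable: without the $\eps,\om$-independent $L^\infty$ bound one could not confine the arguments of $f'$ to a fixed interval, and both the boundedness of $h_\eps$ and the averaged lower bound $\lambda_1 + q_\eps + h_\eps \ge c$ would fail to be uniform. A minor but worth-noting subtlety is the behavior of $h_\eps$ on the set $\{u^\eps = u\}$: the difference-quotient definition vanishes there, which would spoil the pointwise lower bound, so I would use the integral representation above throughout, which is equivalent as far as the equation is concerned and restores coercivity on that set.
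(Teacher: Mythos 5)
Your proof is correct and follows essentially the same route as the paper: recast the equation for $z^\eps$ as a linear Dirichlet problem for $-\Delta + q_\eps + h_\eps$, use the uniform $L^\infty$ bound of Lemma \ref{l.Linf} together with [A3] to get an $\eps,\om$-independent coercivity constant and hence a uniform $L^2\to L^2$ bound on the solution operator, bound the right-hand side by $C\|\chi^\eps\|_{L^2}$, and conclude via Lemma \ref{l.chi}. Your one deviation, replacing the difference-quotient-with-indicator definition of $h_\eps$ by $h_\eps(x)=\int_0^1 f'\bigl(u(x)+t(u^\eps(x)-u(x))\bigr)\,dt$, is a genuine (if minor) improvement: the paper's $h_\eps$ vanishes on $\{u^\eps=u\}$, where the claimed lower bound $q_\eps+h_\eps\ge -\lambda_1+c$ does not follow from \eqref{e.fprime} alone, whereas the averaged representation restores the pointwise bound everywhere while still satisfying $f(u^\eps)-f(u)=h_\eps(u^\eps-u)$.
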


\begin{proof} Let $\cL^{\eps,\om}$ denote the random linear differential operator $-\Delta +  (q_\eps + h_\eps)$. In view of [A3], the uniform bound on $u^\eps$ and the definition of $h_\eps$, we observe that for each fixed $\eps \in (0,1)$ and $\om \in \Om$, the potential term $p^\eps: = q_\eps + h_\eps$ satisfies
\begin{equation*}
-\lambda_1 + c \le p^\eps \le M'.
\end{equation*}
By standard elliptic theory, $\cL^{\eps,\om}: H^1_0 \to H^{-1}$ is invertible. In particular the inverse operator $(\cL^{\eps,\om})^{-1}$ is also a bounded transformation on $L^2(D)$. In fact, $\|(\cL^{\eps,\om})^{-1}\|_{L^2 \to L^2} \le c^{-1}$. Applying this fact, we have
\begin{equation*}
\|z^\eps\|_{L^2}^2 \le 2c^{-2} \left( \|\nu_\eps \chi^\eps\|_{L^2}^2 + \|(f'(u) - h_\eps)\chi^\eps\|^2_{L^2} \right).
\end{equation*}
By [A2] and [A3], $\|\nu_\eps\|_{L^\infty} \le C$ uniformly in $\eps$ and $\om$. By the uniform bounds on $u^\eps, u$, and by the smoothness of $f$, we confirm that $\|f'(u) - h_\eps\|_{L^\infty} \le C$ also uniformly in $\eps$ and $\om$. As a result, we have $\|z^\eps\|_{L^2}^2 \le C\|\chi^\eps\|^2_{L^2}$. The desired result follows.
\end{proof}

\medskip

The following control of $u^\eps - u$ follows immediately.

\begin{corollary}\label{c.xi} Under the same conditions of Lemma \ref{l.chi}, there exists $C>0$ such that 
\begin{equation}\label{e.c.xi}
\E \|u^\eps - u\|_{L^2}^2 \le C \eps^{d}.
\end{equation}
\end{corollary}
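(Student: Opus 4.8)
The plan is to obtain the bound on $u^\eps - u = \xi^\eps$ by a direct triangle inequality, exploiting the decomposition $\xi^\eps = \chi^\eps + z^\eps$ that has already been set up. Since $\chi^\eps$ is the leading corrector governed by the linearized operator $\cG_u$, and $z^\eps = \xi^\eps - \chi^\eps$ is the remainder satisfying the perturbed linear Dirichlet problem, both pieces are under control from the two preceding lemmas.

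First I would write $u^\eps - u = \chi^\eps + z^\eps$ by the very definition $z^\eps := \xi^\eps - \chi^\eps = (u^\eps - u) - \chi^\eps$. Then I would apply the elementary inequality $\|a + b\|^2_{L^2} \le 2\|a\|^2_{L^2} + 2\|b\|^2_{L^2}$ pointwise in $\om$ and take expectations, giving
\begin{equation*}
\E \|u^\eps - u\|^2_{L^2} \le 2\,\E\|\chi^\eps\|^2_{L^2} + 2\,\E\|z^\eps\|^2_{L^2}.
\end{equation*}
At this point the result is immediate: Lemma \ref{l.chi} bounds the first term by $C\eps^d$ and Lemma \ref{l.z} bounds the second term by $C\eps^d$, so the sum is bounded by $C\eps^d$ with a possibly enlarged universal constant $C$ depending on $\|g\|_{L^2}$ through the uniform estimates \eqref{e.l.well} and \eqref{e.Linf}.

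Honestly, there is essentially no obstacle here; the corollary is a routine consequence of the two lemmas, which is exactly why it is stated as a corollary rather than a theorem. All the genuine work—the linear-theory estimate on the corrector $\chi^\eps$ via the Green's function bounds \eqref{e.Gbdd} and the integration of the correlation function yielding the factor $\eps^d$, together with the invertibility of the random operator $\cL^{\eps,\om}$ on $L^2$ used to dominate $z^\eps$ by $\chi^\eps$—has already been carried out. The only thing worth remarking is that the constant $C$ in \eqref{e.c.xi} inherits its dependence on $\|g\|_{L^2}$ from the uniform $H^1$ and $L^\infty$ bounds, which is precisely what makes this corollary coincide with the statement of Theorem \ref{t.size}; indeed this establishes Theorem \ref{t.size} as well.
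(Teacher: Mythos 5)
Your proof is correct and coincides with the paper's own (implicit) argument: the paper states that the corollary ``follows immediately'' from Lemma \ref{l.chi} and Lemma \ref{l.z} via the decomposition $u^\eps - u = \chi^\eps + z^\eps$, which is exactly the triangle-inequality step you spell out. Your closing remark that this also yields Theorem \ref{t.size} is likewise consistent with the paper.
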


\bigskip

\section{Limiting Distributions of the Homogenization Error}
\label{s.distr}

\medskip

In this section, we identify the limiting distribution of the scaled homogenization error, i.e. the limit of the law of $\eps^{-\frac d 2} (u^\eps - u)$ in the space of $L^2(D)$ functions.

\subsection{Expansion formula and overview of proof}

To characterize the limiting distribution of $\eps^{-\frac d 2}(u^\eps - u)$, we extend to nonlinear equations the framework developed in \cite{B-CLH-08,BJ-CMS-11,Jing15} for linear equations. The first step is to find a suitable expansion formula similar to \eqref{e.lexpan} for the homogenization error.

We observe that $z^\eps = u^\eps - u - \chi^\eps$ satisfies the linear equation
\begin{equation*}
-\Delta z^\eps + \ol{q} z^\eps + f'(u) z^\eps = -\nu_\eps \xi^\eps - (f(u^\eps) - f(u) - f'(u)\xi^\eps).
\end{equation*}
Hence $z^\eps$ has the following representation
\begin{equation}
z^\eps = - \cG_u \nu_\eps \xi^\eps - \cG_u \left( f(u^\eps) - f(u) - f'(u) \xi^\eps \right).
\end{equation}
Substituting this formula to $u^\eps - u = \chi^\eps + z^\eps$ and using the formula $\chi^\eps = -\cG_u \nu_\eps u$, we obtain
\begin{equation}
\label{e.expan}
\begin{aligned}
u^\eps - u =& -\cG_u \nu_\eps u - \cG_u \nu_\eps (u^\eps - u) - \cG_u (f(u^\eps) - f(u) - f'(u)\xi^\eps)\\
=&  -\cG_u \nu_\eps u + \cG_u \nu_\eps \cG_u \nu_\eps u +  \cG_u \nu_\eps \cG_u \nu_\eps (u^\eps - u) \\
 & - \cG_u (f(u^\eps) - f(u) - f'(u)\xi^\eps) + \cG_u \nu_\eps \cG_u (f(u^\eps)-f(u) - f'(u)\xi^\eps).
\end{aligned}
\end{equation}

With this expansion formula at hand, we will find the limiting distribution of $\eps^{-\frac d 2}(u^\eps - u)$ by examining the terms on the right hand side one by one. In particular, we note that the first three terms are  the same as those obtained  for linear equations, while the last two terms involve the nonlinearity.

We first recall the standard criterion for establishing weak convergence of probability measures determined by random processes that are $L^2(D)$ functions.  

\begin{theorem}\label{t.criterion}
Let $\{X^\eps\}$, $\eps \in (0,1)$, be a family of random processes on the probability space $(\Om,\cF,\bP)$ and $\{X^\eps\} \subset L^2(D)$. Then $X^\eps$ converges in distribution in $L^2(D)$ to the random process $X$ in $L^2(D)$ if
\begin{itemize}
\item[(i)] \emph{(Convergence of inner products with test functions)}. For any $\varphi \in L^2(D)$, the random variable $\langle \varphi, X^\eps \rangle$ converges in distribution in $\R$ to $\langle \varphi, X \rangle$.
\item[(ii)] \emph{(Tightness of distributions)}. The family of distributions in $L^2(D)$ determined by $\{X^\eps\}$ is tight.
\end{itemize}
\end{theorem}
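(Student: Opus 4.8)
The plan is to run the classical Prokhorov-type argument for weak convergence of laws on the separable Hilbert space $L^2(D)$: tightness supplies relative compactness, and the convergence of one-dimensional projections identifies every subsequential limit uniquely as the law of $X$.

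First I would use that $L^2(D)$ is a Polish space, so that by Prokhorov's theorem the tightness hypothesis (ii) makes the family of laws of $\{X^\eps\}$ relatively compact for the topology of weak convergence of probability measures. Consequently, along any sequence $\eps_n \to 0$ one can extract a subsequence $\eps_{n_k}$ such that $X^{\eps_{n_k}}$ converges in distribution in $L^2(D)$ to some random element $Y$. The task is then to show $Y$ has the same law as $X$, irrespective of the chosen subsequence. Here hypothesis (i) enters: for fixed $\varphi \in L^2(D)$ the functional $z \mapsto \langle \varphi, z \rangle$ is continuous on $L^2(D)$, so the continuous mapping theorem gives $\langle \varphi, X^{\eps_{n_k}} \rangle \to \langle \varphi, Y\rangle$ in distribution in $\R$, while (i) gives $\langle \varphi, X^{\eps_{n_k}} \rangle \to \langle \varphi, X \rangle$. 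Uniqueness of limits in distribution on $\R$ then forces $\langle \varphi, Y \rangle$ and $\langle \varphi, X \rangle$ to have the same law for every $\varphi$; equivalently, their characteristic functionals agree, $\E e^{i \langle \varphi, Y\rangle} = \E e^{i \langle \varphi, X\rangle}$ for all $\varphi \in L^2(D)$.

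The conceptual heart of the argument, and the step I expect to require the most care, is the uniqueness principle that on a separable Hilbert space the law of a random element is completely determined by its characteristic functional $\varphi \mapsto \E e^{i\langle \varphi, \cdot\rangle}$ --- the infinite-dimensional analogue of the Cram\'er--Wold device, which relies essentially on the separability of $L^2(D)$. Granting it, the agreement of characteristic functionals yields that $Y$ and $X$ are equal in law. Thus every distributional subsequential limit of $\{X^\eps\}$ coincides with the law of $X$; since the family is relatively compact and weak convergence of measures on the separable space $L^2(D)$ is metrizable, the standard ``every subsequence has a sub-subsequence converging to the same limit'' argument upgrades this to convergence of the whole family, so that $X^\eps \to X$ in distribution in $L^2(D)$. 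I emphasize that tightness alone provides compactness but not identification; it is precisely the agreement of all projections $\langle \varphi, \cdot\rangle$ in (i) that pins the limit down to $X$.
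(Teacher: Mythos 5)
Your argument is correct and is essentially the standard proof of this Prokhorov-type criterion: the paper itself gives no proof, citing instead Parthasarathy (Chapter VI, Lemma 2.1), whose argument is exactly the one you outline --- tightness plus Prokhorov's theorem for relative compactness, identification of every subsequential limit through the characteristic functional $\varphi \mapsto \E e^{i\langle \varphi, \cdot\rangle}$ (the infinite-dimensional Cram\'er--Wold device, valid on the separable Hilbert space $L^2(D)$), and the metrizability of weak convergence to pass from subsequences to the full family. Nothing further is needed.
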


This Prohorov type result is well known and we refer to {\cite[Chapter VI, Lemma 2.1]{Partha}} for a proof in the general Hilbert space setting. When the tightness of the distribution of $\{X^\eps\}$ is concerned, the following result becomes handy provided that $\{X^\eps\}$ in fact admits higher regularity.

\begin{lemma}\label{l.tight}
Let $\{X^\eps\}$ be as in Theorem \ref{t.criterion}. Suppose further that $\{X^\eps\} \subset H^s(D)$, for some $s \in (0,1)$. Then the family of distributions in $L^2(D)$ determined by $\{X^\eps\}$ is tight if there exists some constant $C > 0$ such that
\begin{equation}\label{e.tight}
\E \|X^\eps\|_{H^s(D)} \le C.
\end{equation}
\end{lemma}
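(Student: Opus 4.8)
The plan is to prove tightness by exploiting the compact embedding of $H^s(D)$ into $L^2(D)$ together with a uniform moment bound. The key analytic fact is that for any $s \in (0,1)$ and bounded $C^2$ domain $D$, the inclusion $H^s(D) \hookrightarrow L^2(D)$ is compact (this is the Rellich--Kondrachov theorem for fractional Sobolev spaces; see \cite{DPV12}). Consequently, closed balls $B_R := \{v \in L^2(D) : \|v\|_{H^s(D)} \le R\}$ are precompact (relatively compact) subsets of $L^2(D)$. Tightness then reduces to showing that the mass of the laws of $\{X^\eps\}$ can be concentrated, uniformly in $\eps$, on such balls.

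First I would fix $\delta > 0$ and seek a compact set $\cK \subset L^2(D)$ with $\bP(X^\eps \notin \cK) < \delta$ for all $\eps$. The natural candidate is $\cK = B_R$ for $R$ chosen large. To control the probability that $X^\eps$ escapes $B_R$, I would invoke Chebyshev's (Markov's) inequality in the form
\begin{equation*}
\bP\left( \|X^\eps\|_{H^s(D)} > R \right) \le \frac{\E \|X^\eps\|_{H^s(D)}}{R} \le \frac{C}{R},
\end{equation*}
where the last inequality is exactly the hypothesis \eqref{e.tight}. Choosing $R = R(\delta) := C/\delta$ makes the right-hand side at most $\delta$, uniformly in $\eps$. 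Since $B_R$ is precompact in $L^2(D)$, its closure $\ol{B_R}$ is compact, and $\bP(X^\eps \notin \ol{B_R}) \le \bP(\|X^\eps\|_{H^s(D)} > R) \le \delta$ for every $\eps \in (0,1)$. As $\delta$ was arbitrary, this is precisely the statement that the family of laws of $\{X^\eps\}$ on $L^2(D)$ is tight.

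The main obstacle, such as it is, lies less in the probabilistic argument (which is a clean application of Markov's inequality) than in correctly invoking the compactness input: one must be sure the fractional Sobolev embedding $H^s(D) \hookrightarrow L^2(D)$ is genuinely compact on the bounded domain $D$, and that $B_R$ is therefore precompact in the $L^2$ topology. This requires $s > 0$ strictly (for $s = 0$ no compactness holds) and uses the boundedness of $D$; both hypotheses are in force here. A minor technical point is that $\ol{B_R}$, the $L^2$-closure of the $H^s$-ball, is the compact set one tests against, and one should note that $\{X^\eps \in B_R\} \subseteq \{X^\eps \in \ol{B_R}\}$ so the probability estimate transfers. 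With these observations the proof is complete.
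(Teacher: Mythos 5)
Your proof is correct: the paper itself does not prove this lemma but defers to \cite[Theorem A.1]{Jing15}, and your argument --- Markov's inequality applied to the uniform bound \eqref{e.tight} combined with the compactness of the embedding $H^s(D)\hookrightarrow L^2(D)$ for $s>0$ on the bounded domain $D$ --- is exactly the standard proof given there. The care you take in passing to the $L^2$-closure $\ol{B_R}$ of the $H^s$-ball is the right way to handle the only minor subtlety.
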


We refer to \cite[Theorem A.1]{Jing15} for a proof. Since $X^\eps$ is $\eps^{-\frac d 2}(u^\eps - u)$ in this paper, it indeed belongs to the more regular space $H^1(D)$. Nevertheless, to obtain the control \eqref{e.tight}, which is uniform in $\eps$, requires some work.

\subsection{Liming distribution for the homogenization error}

The first three terms in the expansion formula \eqref{e.expan} are precisely those encountered in the linear setting; compare with \eqref{e.lexpan}. We recall the following results.

\begin{lemma}\label{l.lin} Assume that the conditions of Theorem \ref{t.size} hold. Then as $\eps \to 0$, we have
\begin{equation}
- \frac{\cG_u \nu_\eps u}{\sqrt{\eps^d}} \xrightarrow{\mathrm{distribution}} \sigma \int_D G_u(x,y) u(y) dW_y,
\end{equation}
with convergence in distribution in the space $L^2(D)$. Moreover, $\cG_u \nu_\eps \cG_u \nu_\eps (u^\eps - u)$ converges in $L^1(\Om,L^2(D))$ and $\cG_u \nu_\eps \cG_u \nu_\eps u$ converges in $L^2(\Om,L^2(D))$ to the zero function.
\end{lemma}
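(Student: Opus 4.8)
The plan is to treat all three assertions as consequences of the linear fluctuation theory, exploiting the fact that $\cG_u$ is self-adjoint on $L^2(D)$ with Green's function $G_u$ obeying the same pointwise and gradient bounds \eqref{e.Gbdd} as the Green's operator of $-\Delta+\ol q$. By [A3] the linearized potential $\ol q + f'(u)$ lies in $L^\infty(D)$ and the operator stays uniformly elliptic, so every estimate from \cite{B-CLH-08,BJ-CMS-11,Jing15} transfers with $\cG$ replaced by $\cG_u$ and $u$ playing the role of the fixed deterministic profile.

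For the leading term I would invoke Theorem \ref{t.criterion}. For condition (i), fix $\varphi\in L^2(D)$ and use self-adjointness of $\cG_u$ to rewrite
\begin{equation*}
\left\langle \varphi, -\eps^{-\frac d2}\cG_u\nu_\eps u\right\rangle = -\eps^{-\frac d2}\int_D \psi(y)\,\nu_\eps(y)\,dy, \qquad \psi := u\,\cG_u\varphi \in L^2(D).
\end{equation*}
A change of variables shows the variance converges, $\eps^{-d}\int_{D^2}\psi(y)\psi(z)R^\eps(y-z)\,dy\,dz \to \sigma^2\int_D\psi^2\,dy = \sigma_\varphi^2$, and the central limit theorem for integrals of the mixing field $\nu$ under assumption [S] upgrades this to convergence in law to the centered Gaussian $\mathcal N(0,\sigma_\varphi^2)$ (the sign is immaterial by symmetry), which is exactly the law of $\langle\varphi,\sigma\int_D G_u(\cdot,y)u(y)\,dW_y\rangle$ from Remark \ref{r.Gaussian}. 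For condition (ii) I would apply Lemma \ref{l.tight}: since $\eps^{-\frac d2}\cG_u\nu_\eps u\in H^1(D)$, it suffices to bound $\E\|\eps^{-\frac d2}\cG_u\nu_\eps u\|_{H^s}$ uniformly for some $s\in(0,1)$, which follows by estimating the fractional seminorm through the gradient bound $|\nabla_x G_u|\le C|x-y|^{-(d-1)}$ together with the short-range bound on $R$.

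The second term is the easiest: since $\nu_\eps$ is multiplication by an $L^\infty$ function bounded uniformly in $\eps,\om$ by [A2], and $\cG_u$ is bounded on $L^2(D)$ with norm $\le c^{-1}$ by [A3], the operator $\cG_u\nu_\eps\cG_u\nu_\eps$ has $L^2\to L^2$ norm bounded uniformly in $\eps,\om$, so
\begin{equation*}
\E\left\|\cG_u\nu_\eps\cG_u\nu_\eps(u^\eps-u)\right\|_{L^2} \le C\,\E\|u^\eps-u\|_{L^2} \le C\left(\E\|u^\eps-u\|_{L^2}^2\right)^{\frac12}\le C\eps^{\frac d2}\to 0,
\end{equation*}
where the last bound is Corollary \ref{c.xi}. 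For the third term I would write $T^\eps := \cG_u\nu_\eps\cG_u\nu_\eps u$ and split $\E\|T^\eps\|_{L^2}^2 = \|\E T^\eps\|_{L^2}^2 + \int_D \mathrm{Var}(T^\eps(x))\,dx$. For the mean, $\E T^\eps(x) = \int_{D^2}G_u(x,y)G_u(y,z)R^\eps(y-z)u(z)\,dy\,dz$; since $R^\eps$ concentrates on the diagonal, integrating out $z$ against the Green's function bound yields a factor $\eps^{d-1}$ in $d=3$ (and $\eps^d|\log\eps|$ in $d=2$), so $\|\E T^\eps\|_{L^2}\to 0$. For the variance I would expand $\mathrm{Var}(T^\eps(x))$ as a fourfold integral of $G_u$'s against the fourth cumulant $\Psi_{\nu_\eps}$ and apply \eqref{e.cumu}: each pairing term localizes two pairs of variables within distance $O(\eps)$, contributing powers of $\eps^{d}$, while the remaining products $|G_u|^2$ are integrable precisely because $2(d-2)<d$ in $d=2,3$; this gives $\int_D\mathrm{Var}(T^\eps(x))\,dx\to 0$ and hence $T^\eps\to 0$ in $L^2(\Om,L^2(D))$.

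The main obstacle is the leading-term CLT: matching the variance is routine, but upgrading it to genuine convergence in distribution to a Gaussian, and simultaneously establishing the uniform $H^s$ bound needed for tightness against the singular gradient of $G_u$, is where the mixing hypothesis [S] and the quantitative fourth-moment control \eqref{e.cumu} do the real work.
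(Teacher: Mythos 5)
Your handling of the leading term and of $\cG_u\nu_\eps\cG_u\nu_\eps u$ is essentially the paper's own argument: test against $\varphi$, identify $\psi=u\,\cG_u\varphi$, invoke the CLT for oscillatory integrals of mixing fields under [S] for the one--dimensional marginals, get tightness from a uniform $H^s$ bound built on the gradient estimate in \eqref{e.Gbdd}; and for the third term, split into mean and fluctuation, estimate the mean by localizing $R^\eps$ on the diagonal (your rates $\eps^{d-1}$ for $d=3$ and $\eps^d|\log\eps|$ for $d=2$ are consistent with the paper's $\|\E T^\eps\|_{L^2}^2\le C\eps^4$) and the fluctuation by the fourth--moment bound \eqref{e.cumu}, giving $\eps^{2d}$ for the squared norm. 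All of that matches.

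The gap is in the middle term $\cG_u\nu_\eps\cG_u\nu_\eps(u^\eps-u)$. You bound $\cG_u\nu_\eps\cG_u\nu_\eps$ merely as an operator with $L^2\to L^2$ norm uniformly bounded in $\eps$ and $\om$, which yields only $\E\|\cG_u\nu_\eps\cG_u\nu_\eps(u^\eps-u)\|_{L^2}\le C\eps^{d/2}$. That is exactly the order of the leading term itself: since the whole expansion \eqref{e.expan} is divided by $\eps^{d/2}$ in the proof of Theorem \ref{t.sl.23}, your estimate only shows that this remainder is $O(1)$ after normalization, not that it vanishes, so the conclusion of the theorem does not follow. (Your other two rates, $\eps^{d-1}\wedge\eps^d|\log\eps|$ and $\eps^{d}$ for the unsquared norms, do beat $\eps^{d/2}$, which shows you implicitly understand that beating $\eps^{d/2}$ is the actual target.) The paper's proof relies on the quantitative smallness of the sandwiched operator,
\begin{equation*}
\E\|\cG_u \nu_\eps \cG_u\|_{L^2 \to L^2}^2 \le C\eps^d,
\end{equation*}
i.e.\ estimate \eqref{e.GnuG}, which is proved from the Green's function bounds \eqref{e.Gbdd} together with [S]. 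Writing $\|\cG_u\nu_\eps\cG_u\nu_\eps(u^\eps-u)\|_{L^2}\le\|\cG_u\nu_\eps\cG_u\|_{L^2\to L^2}\,\|\nu_\eps\|_{L^\infty}\,\|u^\eps-u\|_{L^2}$ and applying Cauchy--Schwarz in $\om$ together with Corollary \ref{c.xi} then gives the bound $C\eps^d=o(\eps^{d/2})$. Without \eqref{e.GnuG} --- which is also the estimate reused to kill the second nonlinear term in \eqref{e.expan} --- this part of the argument does not close.
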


We briefly explain how these results are proved and refer to \cite[Lemmas 4.2, 4.4, 4.5, 5.1 and 5.3]{Jing15} for detailed proofs. 

(1) To show that $- \eps^{-\frac d 2} \cG_u \nu_\eps u$ has the correct limit, according to the criterion Theorem \ref{t.criterion}, Remark \ref{r.Gaussian} and Lemma \ref{l.tight}, it suffices to show, for all $\varphi \in L^2(D)$,
\begin{equation}
\label{e.il1}
 \frac{1}{\sqrt{\eps^d}} \langle -\cG_u \nu_\eps u, \varphi \rangle = \frac{1}{\sqrt{\eps^d}} \int_D \nu\left(\frac{x}{\eps},\om\right) u(x)m(x) dx \xrightarrow[\eps \to 0]{\rm{distribution}} \mathcal{N}\left(0,\sigma^2_\varphi\right),
\end{equation}
where $m := \cG_u \varphi$ and $\sigma^2_\varphi = \sigma^2 \|mu\|^2_{L^2(D)}$, and for some $C>0$ and $s \in (0,1)$ independent of $\eps$,
\begin{equation}
\label{e.Hs1}
\E \left\| \eps^{-\frac d 2} \cG_u \nu_\eps u \right\|_{H^s}^2 \le C.
\end{equation}
We recall that \eqref{e.il1} follows from a generalized central central limit theorem for oscillatory integrals with short range correlated random fields; see \cite[Theorem 3.7]{B-CLH-08}. The uniform $H^s$ estimate \eqref{e.Hs1} for $s$ can be found in \cite[Lemma 5.1]{Jing15}; see Lemma 5.4 below where such a control is needed to estimate the first nonlinear term in \eqref{e.expan}.

\medskip

(2) To show that $\cG_u \nu_\eps \cG_u \nu_\eps (u^\eps - u)$ converges to the zero function in $L^1(\Om,L^2(D))$, we need the fact that for $d = 2,3$,
\begin{equation}
\label{e.GnuG}
\E\|\cG_u \nu_\eps \cG_u\|_{L^2 \to L^2}^2 \le C\eps^d,
\end{equation}
where $\|\cG_u \nu_\eps \cG_u\|_{L^2 \to L^2}$ is the operator norm. This can be proved by using the Green's function bound \eqref{e.Gbdd} and assumption [S]; see \cite[Lemma 4.5]{Jing15}. We then get
\begin{equation*}
\|\cG_u \nu_\eps \cG_u \nu_\eps (u^\eps - u)\|_{L^2} \le \|\cG_u \nu_\eps \cG_u\|_{L^2\to L^2} \|\nu_\eps\|_{L^\infty} \|u^\eps - u\|_{L^2}.
\end{equation*}
Take expectation, apply H\"older inequality and then \eqref{e.Gbdd} and Corollary \ref{c.xi} to get the desired estimate.

(3) To show that $\cG_u \nu_\eps \cG_u \nu_\eps u$ converges to zero in $L^1(\Om,L^2(D))$, the argument above is not valid because $\|\nu_\eps u\|_{L^2}$ is possibly of order one. To circumvent this lack of control, we note that
\begin{equation*}
\|\cG_u \nu_\eps \cG_u \nu_\eps u\|^2_{L^2} \le 2\left( \|\cG_u \nu_\eps \cG_u \nu_\eps u - \E(\cG_u \nu_\eps \cG_u \nu_\eps u)\|^2_{L^2} + \|\E(\cG_u \nu_\eps \cG_u \nu_\eps u)\|_{L^2}^2\right).
\end{equation*}

For the mean function $\E(\cG_u \nu_\eps \cG_u \nu_\eps u)$, we have
\begin{equation*}
\begin{aligned}
& \|\E(\cG_u \nu_\eps \cG_u \nu_\eps u)\|_{L^2}^2 = \int_D \left( \int_{D^2} G_u(x,y) G_u(y,z) R\left(\frac{y-z}{\eps}\right) u(z) dz dy\right)^2 dx\\
= &\int_{D^5} G_u(x,y) G_u(x,\xi) G_u(y,z) G_u(\xi,\eta) R\left(\frac{y-z}{\eps}\right)R\left(\frac{\xi-\eta}{\eps}\right) u(z) u(\eta)d\xi d\eta dy dz dx.
\end{aligned}
\end{equation*}
Integrate over $x$ first and note that, for $d=3$, $\|u\|_{L^\infty} \le C$ and $G_u$ is square integrable and
\begin{equation*}
\int_D |G_u(x,y) G_u(x,\xi)| dx \le C\int_{D} \frac{1}{|x-y|^{d-2} |x-\xi|^{d-2}} dx \le C.
\end{equation*}
We get
\begin{equation*}
\|\E(\cG_u \nu_\eps \cG_u \nu_\eps u)\|_{L^2}^2 \le C \int_{D^4} \frac{1}{|y-z|^{d-2} |\xi-\eta|^{d-2}} \left| R\left(\frac{y-z}{\eps}\right)R\left(\frac{\xi-\eta}{\eps}\right) \right| d\xi d\eta dy dz.
\end{equation*}
After a change of variables and using the fact that $R(\cdot)/|\cdot|^{d-2}$ is integrable over $\R^d$, we verify that the above integral is of order $\eps^{4} \ll \eps^d$. It follows that $\|\eps^{-\frac d 2}\E(\cG_u \nu_\eps \cG_u \nu_\eps u)\|_{L^2}$ converges to zero. The case of $d = 2$ is similar.

For the variation $I_2 := \cG_u \nu_\eps \cG_u \nu_\eps u - \E(\cG_u \nu_\eps \cG_u \nu_\eps u)$, we have for $d = 3$,
\begin{equation*}
\begin{aligned}
\E\|I_2\|_{L^2}^2 & = \E \int_D \left( \int_{D^2} G_u(x,y)G_u(y,z)\left[ \nu_\eps(y)\nu_\eps(z) - \E \nu_\eps(y) \nu_\eps(z)\right] u(z) dz dy\right)^2 dx\\
& = \int_{D^5} G_u(x,y) G_u(x,y') G_u(y,z) G_u(y',z')u(z) u(z')\Psi_{\nu} \left(\frac{y}{\eps},\frac{z}{\eps},\frac{y'}{\eps},\frac{z'}{\eps}\right) dz' dy' dz dy dx.\\
\end{aligned}
\end{equation*}
Now, using the estimates for $\Psi_{\nu}$ given by \eqref{e.cumu}, and by standard techniques for potential integrals, we get $\E\|I_2\|^2_{L^2} \le C\eps^{2d} \ll \eps^d$, which shows that $\eps^{-\frac d 2} I_2$ converges to the zero function in $L^2(\Om,L^2(D))$. For $d=2$, we have the same result.

\medskip

Now we deal with the two nonlinear terms in \eqref{e.expan}.

\begin{lemma} Under the assumptions of Theorem \ref{t.size}, as $\eps \to 0$, we have
\begin{equation}
\frac{\cG_u(f(u^\eps) - f(u) - f'(u)(u^\eps - u))}{\sqrt{\eps^d}} \xrightarrow{\mathrm{distribution}} 0,
\end{equation}
with convergence in distribution in $L^2(D)$ and $0$ denotes the constant zero function. Moreover, the term $\eps^{-\frac d 2}\cG_u \nu_\eps \cG_u (f(u^\eps) - f(u) - f'(u)(u^\eps - u))$ converges to the zero function in $L^1(\Om,L^2(D))$.
\end{lemma}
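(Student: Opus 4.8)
The plan is to show that each of the two nonlinear terms, after scaling by $\eps^{-\frac d2}$, converges to the zero function in $L^1(\Om,L^2(D))$; since $L^1(\Om,L^2(D))$ convergence implies convergence in probability in $L^2(D)$, which in turn forces convergence in distribution to the constant $0$, this is enough for both assertions. Write $\cR^\eps := f(u^\eps) - f(u) - f'(u)(u^\eps - u)$ for the Taylor remainder and recall $\xi^\eps = u^\eps - u$. The starting point is a pointwise quadratic bound. Since $f \in C^2(\R)$ and both $u^\eps$ and $u$ are bounded in $L^\infty(D)$ by a constant $M_0$ independent of $\eps$ and $\om$ (Lemma \ref{l.Linf}, applied to $u^\eps$ and, with $\ol q$ in place of $q_\eps$, to $u$), a second-order Taylor expansion of $f$ with Lagrange remainder gives
\begin{equation*}
|\cR^\eps(x)| = \tfrac12\,|f''(\zeta(x))|\,|\xi^\eps(x)|^2 \le C\,|\xi^\eps(x)|^2,
\end{equation*}
with $\zeta(x)$ between $u(x)$ and $u^\eps(x)$ and $C = \tfrac12\sup_{[-M_0,M_0]}|f''|$. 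Integrating over $D$ yields $\|\cR^\eps\|_{L^1(D)} \le C\|\xi^\eps\|_{L^2(D)}^2$, so by Corollary \ref{c.xi},
\begin{equation*}
\E\,\|\cR^\eps\|_{L^1(D)} \le C\,\E\,\|u^\eps - u\|_{L^2}^2 \le C\eps^d.
\end{equation*}

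The second ingredient is that the solution operator $\cG_u$ smooths $L^1$ data into $L^2$: there is a constant $C$ with $\|\cG_u h\|_{L^2(D)} \le C\|h\|_{L^1(D)}$ for every $h \in L^1(D)$. Indeed, by the Green's function bounds \eqref{e.Gbdd} we have $|\cG_u h(x)| \le C\int_D K(x-y)\,|h(y)|\,dy$, where $K(z) = |z|^{-(d-2)}$ when $d=3$ and $K(z) = |\log|z||+1$ when $d=2$. In both cases $K \in L^2$ on the bounded set $D-D$, so Young's convolution inequality (with the exponent relation $\tfrac12 = \tfrac12 + 1 - 1$) gives $\|\cG_u h\|_{L^2} \le C\|K\|_{L^2}\,\|h\|_{L^1}$. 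This $L^1 \to L^2$ smoothing is exactly what converts the genuinely quadratic, $\eps^d$-small, $L^1$ bound on $\cR^\eps$ into an $L^2$ estimate on $\cG_u\cR^\eps$.

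Combining the two ingredients handles the first term at once:
\begin{equation*}
\E\,\Bigl\|\eps^{-\frac d2}\,\cG_u\cR^\eps\Bigr\|_{L^2} \le C\eps^{-\frac d2}\,\E\,\|\cR^\eps\|_{L^1} \le C\eps^{-\frac d2}\cdot\eps^d = C\eps^{\frac d2} \xrightarrow{\eps\to 0} 0.
\end{equation*}
For the second term I insert the uniform bound $\|\nu_\eps\|_{L^\infty}\le C$ (from [A2][A3]) together with the $L^2\to L^2$ boundedness of $\cG_u$, estimating from the inside out:
\begin{equation*}
\|\cG_u\nu_\eps\cG_u\cR^\eps\|_{L^2} \le \|\cG_u\|_{L^2\to L^2}\,\|\nu_\eps\|_{L^\infty}\,\|\cG_u\cR^\eps\|_{L^2} \le C\,\|\cR^\eps\|_{L^1} \le C\,\|\xi^\eps\|_{L^2}^2,
\end{equation*}
whence $\E\bigl\|\eps^{-\frac d2}\cG_u\nu_\eps\cG_u\cR^\eps\bigr\|_{L^2} \le C\eps^{\frac d2}\to 0$ by Corollary \ref{c.xi} again.

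I expect the only genuinely delicate point to be the necessity of working in $L^1$ rather than in the more obvious $L^2$. If one instead bounds $\|\cR^\eps\|_{L^2} \le C\|\xi^\eps\|_{L^\infty}\|\xi^\eps\|_{L^2} \le C\|\xi^\eps\|_{L^2}$ and uses only the $L^2\to L^2$ continuity of $\cG_u$, the resulting estimate $\E\|\eps^{-\frac d2}\cG_u\cR^\eps\|_{L^2}^2 \le C$ is merely bounded and does not vanish: the uniform $L^\infty$ factor is $O(1)$ and absorbs one of the two powers of $\xi^\eps$, destroying the quadratic gain. That gain survives only in the $L^1$ norm, which is precisely why the $L^1\to L^2$ smoothing of $\cG_u$ must be invoked. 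Everything else is a routine combination of the uniform $L^\infty$ bound (Lemma \ref{l.Linf}), the Green's function estimates \eqref{e.Gbdd}, and the size estimate of Corollary \ref{c.xi}.
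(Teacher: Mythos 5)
Your proof is correct, and for the first term it takes a genuinely different --- and in fact stronger --- route than the paper's. The paper establishes the distributional convergence of $\eps^{-\frac d2}\cG_u r^\eps$ (with $r^\eps = f(u^\eps)-f(u)-f'(u)\xi^\eps$) through the Prohorov-type criterion of Theorem \ref{t.criterion}: it shows that inner products with test functions vanish in $L^1(\Om)$, and separately proves tightness via a uniform (merely bounded, not small) estimate on $\E\bigl[\eps^{-\frac d2}\cG_u r^\eps\bigr]_{H^s}^2$ using the kernel bound \eqref{e.Hs.key}. You instead exploit the $L^1(D)\to L^2(D)$ boundedness of $\cG_u$ --- legitimate, since the majorizing kernels $|z|^{-(d-2)}$ for $d=3$ and $|\log|z||+1$ for $d=2$ are locally square integrable, so Young's inequality applies --- to convert the quadratic bound $\E\|r^\eps\|_{L^1}\le C\eps^d$ directly into $\E\|\eps^{-\frac d2}\cG_u r^\eps\|_{L^2}\le C\eps^{\frac d2}$. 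This gives convergence in $L^1(\Om,L^2(D))$, hence in probability, hence in distribution, bypassing the tightness argument entirely and yielding a quantitative rate where the paper only asserts weak convergence. For the second term your estimate is also correct and slightly more elementary: the paper pairs the operator estimate $\E\|\cG_u\nu_\eps\cG_u\|^2_{L^2\to L^2}\le C\eps^d$ of \eqref{e.GnuG} with the linear bound $\|r^\eps\|_{L^2}\le C\|\xi^\eps\|_{L^2}$, whereas you pair the crude deterministic bound $\|\cG_u\nu_\eps\cG_u h\|_{L^2}\le C\|h\|_{L^1}$ with the quadratic smallness of $\|r^\eps\|_{L^1}$; both give the same $O(\eps^{\frac d2})$ rate after normalization, but yours does not need \eqref{e.GnuG}. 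Your closing diagnosis --- that the quadratic gain survives only in $L^1$ and is destroyed if one passes to $L^2$ via the uniform $L^\infty$ bound --- is exactly why the paper is forced into the weak-convergence machinery for the first term, and why your $L^1\to L^2$ smoothing observation is the key simplification.
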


\begin{proof} \emph{Part I: The convergence of $\cG_u \nu_\eps \cG_u (f(u^\eps) - f(u) - f'(u)(u^\eps - u))$}. This part is simpler and is similar to the control of the remainder term $\cG_u \nu_\eps \cG_u \nu_\eps (u^\eps - u)$ in the linear setting.

\medskip

Firstly, by Taylor's theorem, the regularity of $f$ and the uniform (in $\eps$) bound of $u^\eps$ and $u$, we have
\begin{equation}
\label{e.Taylor}
|f(u^\eps) - f(u) - f'(u)(u^\eps - u)| \le C'|u^\eps - u|^2,
\end{equation}
where $C' = \|f\|_{C^2([-2C,2C])}$ and $C$ is the bound in \eqref{e.Linf}. This shows
\begin{equation*}
\|f(u^\eps) - f(u) - f'(u)(u^\eps - u)\|_{L^2} \le C \|u^\eps - u\|_{L^2}.
\end{equation*}
Applying the uniform bounds on the operator norm of $\cG_u \nu_\eps \cG_u$, we have again
\begin{equation*}
\|\cG_u \nu_\eps \cG_u (f(u^\eps) - f(u) - f'(u)(u^\eps - u))\|_{L^2} \le C\|\cG_u \nu_\eps \cG_u\|_{L^2 \to L^2} \|u^\eps - u\|_{L^2}.
\end{equation*}
The desired result then follows by taking expectation and applying the H\"older inequality, the bound \eqref{e.GnuG} and \eqref{e.c.xi}.
\medskip

\emph{Part II: The limiting distribution of $\eps^{-\frac d 2} \cG_u(f(u^\eps) - f(u) - f'(u)(u^\eps - u))$}. We follow again the criterion in Theorem \ref{t.criterion}.

\medskip

\emph{Convergence in distribution of inner products}. Fix an arbitrary $\varphi \in L^2(D)$. We need to consider the distribution of $\eps^{-\frac d 2} \langle \cG_u (f(u^\eps) - f(u) - f'(u)\xi^\eps, \varphi\rangle = \eps^{-\frac d 2} \langle f(u^\eps) - f(u) - f'(u)\xi^\eps, m\rangle$, where $m = \cG_u \varphi$. Note that $\|m\|_{L^\infty} \le C\|\varphi\|_{L^2}$. Using \eqref{e.Taylor}, we get
\begin{equation*}
\left|  \langle m, f(u^\eps) - f(u) - f'(u)(u^\eps - u)\rangle\right| \le C\|\varphi\|_{L^2} \|u^\eps - u\|_{L^2}^2.
\end{equation*}
As a result, we get
\begin{equation}
\E \left\lvert \eps^{-\frac d 2} \langle m, f(u^\eps) - f(u) - f'(u)(u^\eps - u)\rangle\right\rvert \le C\eps^{-\frac d 2} \E \|u^\eps - u\|_{L^2}^2 \le C\eps^{\frac d 2}.
\end{equation}
This show that $\langle \eps^{\frac d 2} \cG_u (f(u^\eps) - f(u) - f'(u)(u^\eps - u)), \varphi\rangle$ converges, in $L^1(\Om)$ and hence in distribution, to $0$, agreeing with the trivial distribution of $\langle \varphi, 0 \rangle$.

\medskip

\emph{Tightness.} Let $r^\eps = f(u^\eps) - f(u) - f'(u)\xi^\eps$. The function we are considering is $\cG_u r^\eps$. To show tightness of the distributions of $\{\cG_u r^\eps\}$, we control the expectation of the $H^s$ norm of $\cG_u r^\eps$ for some $s \in (0,1)$. For the semi-norm, we calculate
\begin{equation*}
\begin{aligned}
\left[\eps^{-\frac d 2} \cG_u r^\eps\right]_{H^s}^2 &= \frac{1}{\eps^d} \int_{D^2} \frac{\left|\cG_u r^\eps (x) - \cG_u r^\eps(y)\right|^2}{|x-y|^{d+2s}} dy dx \\
&= \frac{1}{\eps^d} \int_{D^4} \frac{ \left(G_u(x,z) - G_u(y,z)\right)  \left(G_u(x,\eta) - G_u(y,\eta)\right)}{|x-y|^{d+2s}} r^\eps(z) r^\eps(\eta)\, dz d\eta dy dx.
\end{aligned}
\end{equation*}
It is proved in \cite[Lemma 5.1]{Jing15} that the uniform bounds \eqref{e.Gbdd} on the Green's function and its gradient imply that
\begin{equation}\label{e.Hs.key}
\int_{D^2} \frac{|(G_u(x,z)-G_u(y,z))(G_u(x,\eta)-G_u(y,\eta))|}{|x-y|^{d+2s}} dy dx \le C,
\end{equation}
for any $s \in (0,\frac{1}{2})$, uniformly for $z, \eta \in D$. We note also that \eqref{e.Taylor} implies, for some $C > 0$ independent of $\eps$ and $\om$,
\begin{equation*}
\|r^\eps\|_{L^1} \le C\|u^\eps - u\|_{L^2}^2.
\end{equation*}
Now we integrate over $x$ and $y$ first and use the inequalities above, and get
\begin{equation*}
\E \left[\eps^{-\frac d 2} \cG_u r^\eps\right]_{H^s}^2 \le C \eps^{-d}\, \E \left(\int_D |r^\eps(z)| dz\right)^2 \le C\eps^{-d}\, \|r^\eps\|_{L^\infty}\E \|r^\eps\|_{L^1} \le C.
\end{equation*}
Similarly, one can show that $\E\|\eps^{-\frac d 2} \cG_u r^\eps\|_{L^2} \le C$ as well. We conclude that $\{\E \|\eps^{-\frac d 2} \cG_u r^\eps\|_{H^s(D)}\}$ is uniformly bounded. It follows that the family of probability distributions in $L^2(D)$ determined by $\{\cG_u r^\eps\}$ is tight. We get the desired result by an application of Theorem \ref{t.criterion}.
\end{proof}

Finally, we combine all of the results above and prove the second main theorem.

\begin{proof}[Proof of Theorem \ref{t.sl.23}] According to the above results, in the expansion formula \eqref{e.expan}, the first term converges in distribution in the space of $L^2(D)$ functions to the desired limit of Theorem \ref{t.sl.23}. All other terms converge in distribution to the zero function, which is deterministic. Therefore, these terms converge to zero also in probability. As a result, the sum of all terms converge in distribution to the limit of the leading term. This concludes our proof.
\end{proof}


\section{Further Discussions}
\label{s.discussion}

\medskip

\subsection{Long range correlated random fields}

We have assumed so far that the random potential $q(x,\om)$ had short-range correlation. Our approach applies to the setting of some long range correlated potentials as well. Following \cite{BGMP-AA-08,BGGJ12_AA}, a large class of long-range correlated potential can be constructed as functionals of  long range correlated Gaussian random fields. Let $q(x,\omega) = \ol{q} + \nu(x,\omega)$ with $\ol{q}$ a nonnegative constant; we assume

\begin{enumerate}
\item[{[L1]}]  $\nu(x,\omega) = \Phi(\frakg(x))$; $\frakg(x,\omega)$ is a centered stationary Gaussian random field 
with unit variance. Furthermore, the correlation function of $\frakg(x,\omega)$ has a heavy 
tail in the sense that, for some positive constant $\kappa_\frakg$ and some real number $\alpha \in (0, 
d)$,
\begin{equation}
R_\frakg(x) := \E\{\frakg(y,\omega)\frakg(y+x,\omega)\} \sim \kappa_\frakg|x|^{-\alpha} \ \mbox{as}\ 
|x|\rightarrow\infty.
\label{e.tail}
\end{equation}

\item[{[L2]}] The function $\Phi: \R \to \R$ 
satisfies $-M \le \Phi + \ol{q} \le M$, and has Hermite rank one, i.e.
\begin{equation}
\int_\R \Phi(s) e^{-\frac{s^2}{2}} ds = 0, \quad \quad V_1 := \int_\R s\Phi(s) e^{-\frac{s^2}{2}} ds \ne 0.
\label{e.odd}
\end{equation}

\item[{[L3]}]The Fourier transform $\hat{\Phi}$ of the function $\Phi$ satisfies
\begin{equation}
\label{e.Phihat}
\int_{\R} |\hat{\Phi}(\xi)| \big( 1 + |\xi|^3 \big) < \infty.
\end{equation}
\end{enumerate}
We henceforth refer assumption [L] to the above conditions all together.

\medskip

The assumption [L2] ensures that $\nu(x,\omega) = \Phi(\frakg(x,\omega))$ is mean zero and the bounds on $\Phi$ ensure that $|q(x,\omega)| \le M$, which is \eqref{e.qbdd}. From the above construction, we check that $\nu(x,\omega)$ is stationary ergodic and has a long-range correlation function  decaying as $\kappa |x|^{-\alpha}$, $\kappa = V_1^2 \kappa_\frakg$; see \cite[Lemma A.3]{Jing15} for the details. Assumption [L3] allows one to derive a (non-asymptotic) estimate, see \cite[Lemma A.5]{Jing15}, for the fourth-order moments of $\nu(x,\omega)$. We have the following analog of Theorem \ref{t.size} and Theorem \ref{t.sl.23}:
\begin{theorem} \label{t.size.l}
Let $d = 2,3$, $u^\eps$ and $u$ be the solutions to \eqref{e.rpde} and \eqref{e.hpde} respectively. Suppose that {\upshape[A1][A2][A3]} and {\upshape[L]} hold, $g \in L^2(D)$. Then, there exists positive universal constant $C$, such that 
\begin{equation}\label{e.t.size.f.l}
\E\,\|u^\eps - u\|_{L^2}
\le 
C(\|g\|_{L^2}) \eps^{\frac \alpha 2}.
\end{equation}
\end{theorem}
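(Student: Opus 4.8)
The plan is to adapt the short-range argument of Theorem \ref{t.size} to the long-range setting, keeping the same decomposition $u^\eps - u = \chi^\eps + z^\eps$ with $\chi^\eps = -\cG_u \nu_\eps u$ and $z^\eps = u^\eps - u - \chi^\eps$. The entire skeleton from Section \ref{s.qerror} carries over verbatim: Lemma \ref{l.z} reduces the bound on $z^\eps$ to a bound on $\chi^\eps$ via the uniform invertibility of $\cL^{\eps,\om} = -\Delta + (q_\eps + h_\eps)$ on $L^2$, and this reduction uses only [A2][A3] and the uniform $L^\infty$ bound of Lemma \ref{l.Linf}, none of which depend on the correlation structure. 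Hence the crux is to re-prove the corrector estimate of Lemma \ref{l.chi}, namely to show $\E\|\chi^\eps\|_{L^2}^2 \le C\eps^\alpha$ in the long-range case, from which \eqref{e.t.size.f.l} follows by Corollary \ref{c.xi} and Jensen's inequality (so that the $L^1(\Om)$ bound on the norm is controlled by the square root of the $L^2(\Om)$ bound).

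First I would write out, exactly as in Lemma \ref{l.chi},
\begin{equation*}
\E\|\chi^\eps\|_{L^2}^2 = \int_{D^3} G_u(x,y) G_u(x,z)\, R^\eps(y-z)\, u(y) u(z)\, dy\, dz\, dx,
\end{equation*}
where $R^\eps(y-z) = R((y-z)/\eps)$ and $R$ is the correlation function of $\nu$. The decisive difference from the short-range case is that $R$ is no longer integrable: by [L1][L2] one has $R(x) \sim \kappa|x|^{-\alpha}$ with $\alpha \in (0,d)$, so $R^\eps(y-z) \approx \kappa\,\eps^\alpha |y-z|^{-\alpha}$. Integrating over $x$ first, the uniform Green's-function bounds \eqref{e.Gbdd} give $\int_D |G_u(x,y) G_u(x,z)|\,dx \le C$ uniformly in $y,z$ for $d=2,3$, exactly as before. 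What remains is
\begin{equation*}
\E\|\chi^\eps\|_{L^2}^2 \le C \int_{D^2} |R^\eps(y-z)|\,|u(y)\,u(z)|\, dy\, dz \le C\|u\|_{L^\infty}^2 \int_{D^2} |R^\eps(y-z)|\, dy\, dz,
\end{equation*}
and the change of variables $w = (y-z)/\eps$ produces a factor $\eps^d$ times $\int_{(D-D)/\eps} |R(w)|\,dw$; since $|R(w)| \lesssim |w|^{-\alpha}$ and $\alpha < d$, the latter integral grows like $\eps^{\alpha-d}$, yielding the net scaling $\eps^d \cdot \eps^{\alpha-d} = \eps^\alpha$. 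This is the single place where the long-range exponent $\alpha$ enters, and it explains the replacement of $\eps^d$ by $\eps^\alpha$ in the conclusion.

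The main obstacle, and the reason the short-range proof cannot simply be quoted, is the divergence of $\int R$ when $\alpha < d$: one must track how the Green's-function integration in $x$ interacts with the non-integrable tail in $|y-z|$, and in particular verify that it is legitimate to integrate out $x$ uniformly before confronting the singular kernel $|y-z|^{-\alpha}$. I expect the technical care to lie in (i) justifying the asymptotic $R(x) \sim \kappa|x|^{-\alpha}$ as a genuine pointwise upper bound $|R(x)| \le C(1+|x|)^{-\alpha}$ valid on all of $\R^d$, which follows from [L1][L2] and the boundedness of $\nu$ (controlling the behaviour near the origin, where the tail asymptotics say nothing); and (ii) confirming that the combined kernel $\int_D G_u(x,y)G_u(x,z)\,dx$ times $|y-z|^{-\alpha}$ is integrable over $D^2$, which is immediate since the first factor is bounded and $\int_{D^2} |y-z|^{-\alpha}\,dy\,dz < \infty$ precisely because $\alpha < d$. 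Once the corrector estimate is secured, Lemma \ref{l.z} and Corollary \ref{c.xi} transfer the $\eps^\alpha$ bound to the full error $u^\eps - u$ without further modification, and the stated $L^1(\Om)$-type estimate \eqref{e.t.size.f.l} follows.
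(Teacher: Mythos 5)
Your proposal is correct and is precisely the adaptation the paper has in mind: the paper does not write out a proof of Theorem \ref{t.size.l} (it defers to \cite{BGGJ12_AA,Jing15} in Remark \ref{rem:main3}), but the intended argument is exactly your route --- keep the decomposition $u^\eps-u=\chi^\eps+z^\eps$ and the deterministic, correlation-independent reduction of Lemma \ref{l.z}, and redo only the corrector estimate using $|R(x)|\le C(1+|x|)^{-\alpha}$, which converts the factor $\eps^d$ into $\eps^\alpha$. Your two flagged points of care (the global pointwise bound on $R$ from [L1][L2] and the boundedness of $\nu$, and the integrability of $|y-z|^{-\alpha}$ over $D^2$ against the bounded kernel $\int_D G_u(x,y)G_u(x,z)\,dx$) are the right ones, and passing from the second-moment bound to \eqref{e.t.size.f.l} by Cauchy--Schwarz is fine.
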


Let $W^\alpha(dy)$ be defined formally as $\dot{W^\alpha} (y) dy$ and $\dot{W^\alpha}(y)$ be a centered stationary Gaussian random field with covariance function $\kappa |x-y|^{-\alpha}$, where $\kappa = \kappa_\frakg V_1^2 > 0$ and $\kappa_\frakg$ and $V_1$ are defined as in \eqref{e.tail} and \eqref{e.odd}. Then we have the following.

\begin{theorem}
\label{thm:main3}
Suppose that the assumptions in Theorem \ref{t.size.l} hold. Let $G_u(x,y)$ be the Green's function of \eqref{e.Green}. Then as $\eps \to 0$,
\begin{equation}
\frac{u^\eps - u}{\sqrt{\eps^{\alpha}}} \xrightarrow[\eps \to 
0]{\mathrm{distribution}} \sqrt{\kappa} \int_{D} G_u(x, y) u (y) W^\alpha(dy), \quad\quad \text{in } L^2(D).
\label{e.main3}
\end{equation}
\end{theorem}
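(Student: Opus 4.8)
The plan is to run, mutatis mutandis, the expansion argument that proved Theorem~\ref{t.sl.23}, replacing the short-range scaling $\eps^{d/2}$ throughout by the long-range scaling $\eps^{\alpha/2}$ and the short-range central limit theorem by its long-range analogue. The expansion \eqref{e.expan} is purely algebraic and holds verbatim here, so with $r^\eps := f(u^\eps)-f(u)-f'(u)\xi^\eps$ I would write
\[
u^\eps - u = -\cG_u\nu_\eps u + \cG_u\nu_\eps\cG_u\nu_\eps u + \cG_u\nu_\eps\cG_u\nu_\eps(u^\eps-u) - \cG_u r^\eps + \cG_u\nu_\eps\cG_u r^\eps,
\]
and show that, after division by $\sqrt{\eps^\alpha}$, only the first term survives in distribution while the remaining four converge to the zero function in probability. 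As in the short-range proof, convergence in law in $L^2(D)$ of each term is reduced, via Theorem~\ref{t.criterion} and Lemma~\ref{l.tight}, to convergence in law of the inner products $\langle\,\cdot\,,\varphi\rangle$ against arbitrary $\varphi\in L^2(D)$ together with a uniform-in-$\eps$ bound on the expected $H^s$ norm for some $s\in(0,\tfrac12)$.

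For the leading term, writing $m:=\cG_u\varphi\in L^\infty(D)$, the inner product equals $-\eps^{-\alpha/2}\int_D \nu(x/\eps,\om)\,u(x)\,m(x)\,dx$. I would invoke the central limit theorem for oscillatory integrals of subordinated Gaussian potentials $\nu=\Phi(\frakg)$: assumptions [L1]--[L3], namely the heavy tail $R_\frakg(x)\sim\kappa_\frakg|x|^{-\alpha}$, the Hermite-rank-one condition, and the Fourier integrability of $\Phi$, are precisely what guarantee that this $\eps^{\alpha/2}$-scaled integral converges to a centered Gaussian of variance $\kappa\int_{D^2}(um)(x)(um)(y)|x-y|^{-\alpha}\,dx\,dy$ with $\kappa=V_1^2\kappa_\frakg$; this is the long-range counterpart of the short-range statement and is available from \cite{BGGJ12_AA,Jing15}. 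By symmetry of $G_u$ and the definition of $W^\alpha$, this is exactly the variance of $\langle\varphi,X\rangle$ for the claimed limit $X$ in \eqref{e.main3}. Tightness follows from the uniform estimate $\E\|\eps^{-\alpha/2}\cG_u\nu_\eps u\|_{H^s}^2\le C$, proved from the Green's-function bounds \eqref{e.Gbdd} and the correlation decay exactly as in \cite{Jing15}.

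The two purely linear error terms are controlled as in Theorem~\ref{t.sl.23}. For $\cG_u\nu_\eps\cG_u\nu_\eps(u^\eps-u)$ I would use the long-range analogue of the operator-norm bound \eqref{e.GnuG}, the uniform bound $\|\nu_\eps\|_{L^\infty}\le C$, and the size estimate; for $\cG_u\nu_\eps\cG_u\nu_\eps u$ I would split into mean and fluctuation, bounding the mean directly using the two factors of $R^\eps$ and the fluctuation with the fourth-order moment bound \eqref{e.cumu}, whose long-range form follows from [L3]. The genuinely new terms are the two nonlinear ones, and these are where I would concentrate. Using the Taylor bound \eqref{e.Taylor}, $|r^\eps|\le C|u^\eps-u|^2$, together with the uniform $L^\infty$ control of Lemma~\ref{l.Linf}, I would bound $\E|\eps^{-\alpha/2}\langle m,r^\eps\rangle|\le C\eps^{-\alpha/2}\,\E\|u^\eps-u\|_{L^2}^2$ for the inner products and deduce tightness of $\{\cG_u r^\eps\}$ from the kernel bound \eqref{e.Hs.key} and $\E\|r^\eps\|_{L^1}\le C\,\E\|u^\eps-u\|_{L^2}^2$, while $\cG_u\nu_\eps\cG_u r^\eps$ is dominated by $\|\cG_u\nu_\eps\cG_u\|_{L^2\to L^2}\|r^\eps\|_{L^2}$ and handled by the preceding operator bound. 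Here I would use the second-moment size estimate $\E\|u^\eps-u\|_{L^2}^2\le C\eps^\alpha$, which is obtained exactly as in Lemmas~\ref{l.chi}--\ref{l.z} since those bounds rely only on the correlation $R$ and the pathwise ellipticity and so transfer unchanged to the long-range setting. The decisive point is that the nonlinear remainder is \emph{quadratic} in $u^\eps-u$, hence of size $\eps^\alpha$, which strictly beats the fluctuation scale $\eps^{\alpha/2}$; consequently the main obstacle is not the nonlinearity itself but verifying, in this non-integrable-correlation regime, the long-range central limit theorem for the leading term and the accompanying fourth-moment estimate \eqref{e.cumu}, both of which hinge on the structural hypotheses [L1]--[L3]. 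A final Slutsky argument then identifies the limit of the scaled error with that of its leading term, completing the proof.
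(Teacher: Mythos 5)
Your outline is correct and coincides with the paper's intended argument: the paper itself omits the proof of Theorem~\ref{thm:main3}, stating in Remark~\ref{rem:main3} only that it is obtained by combining the long-range analysis of the linear terms from \cite{BGGJ12_AA,Jing15} with the control of the nonlinear terms developed in Section~\ref{s.distr}, which is exactly the adaptation you carry out. You also correctly flag the two points where the short-range ingredients must be replaced (the long-range CLT for the leading oscillatory integral and the [L3]-based fourth-moment estimate in place of \eqref{e.cumu}), and the key observation that the quadratic nonlinear remainder of size $\eps^{\alpha}$ beats the fluctuation scale $\eps^{\alpha/2}$ is precisely what makes the nonlinear terms negligible.
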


\begin{remark}\label{rem:main3}
We will not present the proof since they can easily be adapted from the approach in \cite{BGGJ12_AA,Jing15} and our control of the nonlinear terms earlier. The right hand side of \eqref{e.main3} is an integral with respect to the multiparameter Gaussian random processes $W^\alpha$, we refer to \cite{Khoshnevisan} for the theory. Let us denote this integral by $X$; then it determines a Gaussian distribution in the space $L^2(D)$. In particular, for any $\varphi \in L^2(D)$, the inner product $\langle \varphi, X\rangle$ has Gaussian distribution $\mathcal{N}(0,\sigma^2_{\alpha,\varphi})$ with
\begin{equation}
\sigma_{\alpha,\varphi}^2 := \kappa \int_{D^2} \frac{ (u \cG_u \varphi) 
(y) (u \cG_u \varphi)(z)}{|y-z|^\alpha} dy dz.
\label{e.covmat}
\end{equation}
\end{remark}

\medskip


\subsection{Non-separated nonlinearities}

We have so far assumed that the heterogeneous reaction function was of the form $f^\eps(x,u,\omega) = q_\eps(x,\omega)u + f(u)$; in other words, the random potential $q_\eps$ and the nonlinear function $f$ are separated. This choice is made mainly for notational simplicity. By some careful modifications of the main assumptions, we can extend our result to a class of general nonlinearities in non-separated form and consider a heterogeneous problem of the form:
\begin{equation} \label{e.grpde}
\left\{
\begin{aligned}
&-\Delta u^\eps + f\left(\frac{x}{\eps},u^\eps; \om\right) = g(x), \quad&\quad& x \in D,\\
&u^\eps = 0, \quad&\quad& x \in \partial D.
\end{aligned}
\right.
\end{equation}
Here, the nonlinear reaction is $f_\eps(x,u,\om) = f(\frac{x}{\eps},u,\om)$. We may next modify [A1][A2][A3] as follows.
\begin{itemize}
\item[{[A1']}] $f(x,s,\omega)$, $x \in \R^d$, $s \in \R$, is a random field on $(\Om,\cF,\bP)$ and stationary ergodic in $x$. In particular, there exists $\widetilde f(s,\om)$ such that $f(x,s,\om) = \widetilde{f}(s,\tau_x \om)$.

\item[{[A2']}] There exist $M \ge 1$ and $\gam > 1$ such that
\begin{equation}
\big| \widetilde{f}(s,\om) \big| \le M(1+|s|^\gam), \quad \text{ for all } s \in \R \text{ and } \om \in \Om.
\end{equation}
Moreover, if $d=3$, we further assume that $\gam < 2d/(d-2) -1 =  5$.

\item[{[A3']}] $\widetilde f(\cdot,\om): \R \to \R$ is continuously twice differentiable uniformly in $\om$, and for some $c > 0$, $\gam_1, \gam_2 \ge 0$, for all $\om \in \Om$ and $s \in \R$,
\begin{eqnarray}
&&\lambda_1 + \widetilde{f}'(s,\om) \ge c,\label{e.gfprime}\\
&&|\widetilde{f}'(s,\om)| \le M(1+|s|^{\gam_2}), \; |\widetilde{f}''(s,\om)| \le M(1+|s|^{\gam_2}). \label{e.gfpprime}
\end{eqnarray}
\end{itemize}
Here $\widetilde{f}'(s,\om)$ and $\widetilde{f}''(s,\om)$ denote, respectively, the first and second order derivatives of $\widetilde{f}$ with respect to $s$. Let $\ol{f}(s)$ be the mean $\E f(y,s,\cdot) = \E \widetilde{f}(s,\cdot)$. Then the mean-zero fluctuation of the random potential, i.e. $\nu(y,\om)$ defined in \eqref{e.nudef}, is replaced by
\begin{equation}
\label{e.gnudef}
\nu(y,s,\om) = f'(y,s,\om) - \ol{f}'(s).
\end{equation}
We also let $\mu(y,s,\om) := f(y,s,\om) - \ol{f}(s)$, which is the antiderivative of $\nu$ and $\mu_\eps(x,u)$ plays the role of $\nu_\eps(x)u$ in the separated case. 

To ensure that the random medium has short range correlations, we assume that:
\begin{itemize}
\item[{[S']}] There exists $\varrho: \R_+ \to \R$ satisfying [S] such that, for each $s \in \R$, the maximal correlation function of the family of random fields $\{\nu(x,s), \mu(x,s)\}$, $x \in K$, is bounded above by $\varrho$.
\end{itemize}

Under [A1'][A2'][A3'] and [S'], the approach presented in this paper may be applied to prove that $u^\eps$ of \eqref{e.grpde} converges weakly in $H^1$ and strongly in $L^2$, as $\eps \to 0$ for a.e. $\om \in \Om$, to the solution $u$ of
\begin{equation} \label{e.ghpde}
\left\{
\begin{aligned}
&-\Delta u + \ol{f}(u) = g(x), \quad&\quad& x \in D,\\
&u = 0, \quad&\quad& x \in \partial D.
\end{aligned}
\right.
\end{equation}
Indeed, the existence and well-posedness of the above equations are guaranteed by [A2'][A3']. We note that the proofs of Lemma \ref{l.well} and Lemma \ref{l.Linf} can be adapted since they only require the uniform lower bound on the derivative of $f$ and the uniform bound on growth rate of $f$. Hence, we still have uniform $L^\infty$ bound on $u^\eps$. The non-separated form does introduce some technicality in the proof of almost sure homogenization theory because, in the application of ergodic theorem, we have to deal with the $u$-dependent random process $f(y,u,\om)$. Even though $f(\frac{x}{\eps},s,\om)$ converges weakly in $L^p_{\rm loc}$ for all $p \in [1,\infty)$ to $\ol{f}(s)$ a.e. in $\Om$ for each fixed $s$, to show $f(\frac{x}{\eps},u(x))$ converges to $\ol{f}(u(x))$, one needs extra effort. For instance, a full measure event $\Om_1$ can be constructed by intersecting $\widetilde\Om_s$, over rationals $s \in \mathbb{Q}$, where $\widetilde\Om_s$ is the event on which $f(\frac{x}{\eps},s)$ converges to $\ol{f}(s)$. The proof of Lemma \ref{t.homog} then gets through by an additional approximation of $u$ using simple functions with rational values.

Once homogenization theory is established, we can continue to carry out the error estimates and the analysis of the distribution of fluctuations. The linearized equation, i.e. the analog to \eqref{e.Green}, is
\begin{equation}
\label{e.gGreen}
\left\{
\begin{aligned}
&-\Delta G_u(x;y) + \ol{f}'(u(x))\, G_u(x;y) = \delta_y, \quad &\quad &\text{ for } x \in D,\\
&G_u(x;y) = 0, \quad &\quad &\text{ for } x \in \partial D.
\end{aligned}
\right. 
\end{equation}
The leading term in the distribution will be given by the solution to
\begin{equation*}
-\Delta \chi^\eps + \ol{f}'(u) \chi^\eps = -\mu_\eps(x,u,\om), \quad \text{ in } D,
\end{equation*}
which can be conveniently written as $\chi^\eps = - \cG_u \mu_\eps(x,u)$. Here, $\cG_u$ denotes the fundamental solution operator to \eqref{e.gGreen}. We then get the expansion formula
\begin{equation}
\label{e.gexpan}
\begin{aligned}
u^\eps - u = & -\cG_u \left(\mu_\eps(x,u)\right)  - \cG_u \nu_\eps(x,u) (u^\eps - u) - \cG_u \left( f_\eps(x,u^\eps) - f_\eps(x,u) - f'_\eps(x,u)(u^\eps - u)\right)\\
= &  -\cG_u \left(\mu_\eps(x,u)\right) + \cG_u \nu_\eps(x,u) \cG_u \mu_\eps(x,u) +  \cG_u \nu_\eps(x,u) \cG_u \nu_\eps(x,u) (u^\eps - u) \\
 & - \cG_u \left( f_\eps(x,u^\eps) - f_\eps(x,u) - f'_\eps(x,u)\xi^\eps\right)\\
 & + \cG_u \nu_\eps(x,u) \cG_u \left( f_\eps(x,u^\eps) - f_\eps(x,u) - f'_\eps(x,u)\xi^\eps\right).
\end{aligned}
\end{equation}
In view of the modified assumption [S'], and by following the argument developed in earlier sections, we show that all items on the right except the first one are of negligible order in distribution. In particular, as an analog to Theorem \ref{t.sl.23}, we have
\begin{equation}
\label{e.gsl.23}
\frac{u^\eps - u}{\sqrt{\eps^d}} \xrightarrow{\rm{distribution}}  \int_D G_u (x,y) \sigma_\mu(u(y)) dW(y), \quad \text{ in  } L^2(D).
\end{equation}
Here, $\sigma_\mu(s)$, for each fixed $s \in \R$, is defined as
\begin{equation*}
\sigma^2_\mu(s) = \int_{\R^d} R_\mu(x,s) dx = \int_{\R^d} \E \mu(x,s)\mu(0,s) dx.
\end{equation*}


\subsection{Further studies}

We conclude this section by mentioning a couple of extensions to the above results. First, this paper considers only the physical dimension $d = 2,3$, and it would seem natural to extend the studies of this paper to arbitrary dimension $d \ge 4$. For linear equations, as studied by \cite{Jing15}, the framework of \cite{B-CLH-08,BJ-CMS-11,BGGJ12_AA} can still be applied, and is more or less unchanged, provided that we seek for the limiting distribution in $H^{-1}$ or other weaker spaces. This approach may not apply directly in the nonlinear setting. The uniform $L^\infty$ estimates on $u^\eps$ is essential in our treatment of the nonlinearity, and such an estimate is not available in higher dimension for general source term $g$, the right hand side in \ref{e.rpde}, in $L^2$ and not more regular. It remains to explore how to generalize the framework to other functional settings and to develop new ways to control the nonlinearity terms.

Another related further study is to modify the analysis in the continuum setting to handle the discrete setting, i.e., to address the numerical methods of \eqref{e.rpde}. As shown in \cite{BJ11_MMS,BJ14_M2AN} for the linear equations, studying the limiting distribution of the difference between the solutions to the heterogeneous equation and to the homogenized solution, obtained from multi-scale numerical schemes, in the limit of $\eps \to 0$ and then the discretization size $h \to 0$, and comparing the results with the theory in the continuum setting, one can build a benchmark to assess the performance of numerical schemes in capturing numerically the fluctuations of heterogeneous equations. To perform such an analysis in the nonlinear setting requires new ideas, in addition to those in \cite{BJ11_MMS,BJ14_M2AN}.


\bigskip

\section*{Acknowledgements}

GB acknowledges partial support from NSF grant DMS-1408867. WJ acknowledges partial support from NSF grant DMS-1515150. The authors thank the referees for their helpful comments.

\bigskip

\bibliographystyle{abbrv}
\bibliography{bj_rd}
\end{document}